\theoremstyle{plain}
\newtheorem{theorem}{Theorem}[section]
\newtheorem{lemma}[theorem]{Lemma}
\newtheorem{corollary}[theorem]{Corollary}
\newtheorem{conjecture}[theorem]{Conjecture}
\theoremstyle{definition}
\newtheorem{observation}[theorem]{Observation}
\newtheorem{remark}[theorem]{Remark}    
\newcommand{\barany}{B\'{a}r\'{a}ny}
\newcommand{\blagojevic}{Blagojevi\'{c}}
\newcommand{\cech}{\v{C}ech}
\newcommand{\matousek}{Matou\v{s}ek}
\newcommand{\vrecica}{Vre\'{c}ica}
\newcommand{\zivaljevic}{\v{Z}ivaljevi\'{c}}
\newcommand{\R}{\mathbbm{R}} 
\newcommand{\Z}{\mathbbm{Z}} 
\newcommand{\ZZ}{\mathbbm{Z}} 
\newcommand{\wo}{\backslash} 
\newcommand{\To}{\longrightarrow} 
\newcommand{\toG}[1]{\longrightarrow_{#1}} 
\newcommand{\tof}[1]{\stackrel{#1}{\longrightarrow}} 
\newcommand{\im}{\textnormal{im}} 
\newcommand{\iso}{\cong} 
\newcommand{\conv}{\textnormal{conv}} 
\newcommand{\one}{\mathbbm{1}} 
\newcommand{\ind}{\textnormal{Ind}}
\newcommand{\pt}{\textnormal{pt}} 
\newcommand{\incl}{\hookrightarrow} 
\newcommand{\CC}{C}
\newcommand{\qm}[1]{``{#1}''} 
\newcommand{\eps}{\varepsilon}
\newcommand{\rank}{\textnormal{rank}}
\newcommand{\id}{\textnormal{id}}
\newcommand{\vertices}{\textnormal{vert}}
\newcommand{\simplex}{\sigma}
\newcommand{\conn}{\textnormal{conn}}
\newenvironment{myitemize}
{\begin{list}{\labelitemi}{\leftmargin=1em}}
{\end{list}}
\newcommand\Sym{\mathfrak S}
\begin{document}

\title{\textsf{Optimal bounds for a colorful Tverberg--\vrecica\ type problem}} 
\newcommand{\smallToggle}{} 
\author{
Pavle V. M. Blagojevi\'{c}\thanks{%
Supported by the grant 144018 of the Serbian Ministry of Science and
Environment} \\
\smallToggle Mathemati\v cki Institut SANU\\
\smallToggle Knez Michailova 36\\
\smallToggle 11001 Beograd, Serbia\\
\smallToggle \url{pavleb@mi.sanu.ac.rs} \and \setcounter{footnote}{0} 
Benjamin Matschke$^{*}$%
\thanks{$^{*}$Supported by Deutsche Telekom Stiftung}\\
\smallToggle Inst.\ Mathematics, MA 6-2\\
\smallToggle TU Berlin\\
\smallToggle 10623 Berlin, Germany\\
\smallToggle \url{matschke@math.tu-berlin.de} \and \setcounter{footnote}{0}
G\"unter M. Ziegler$^{**}$%
\thanks{$^{**}$%
Partially supported by DFG,
Research Training Group ``Methods for Discrete Structures''} \\
\smallToggle Inst.\ Mathematics, MA 6-2\\
\smallToggle TU Berlin\\
\smallToggle 10623 Berlin, Germany\\
\smallToggle \url{ziegler@math.tu-berlin.de}}
\date{{\small October 26, 2009; revised December 30, 2009}}

\maketitle
\begin{abstract}\noindent
We prove the following optimal colorful Tverberg--Vre\'{c}ica type transversal theorem: 
For prime $r$ and for any $k+1$ colored collections of points $\CC^\ell$ in $\R^d$, 
$\CC^\ell=\biguplus C_i^\ell$, $|\CC^\ell|=(r-1)(d-k+1)+1$, 
$|C_i^\ell|\le r-1$, $\ell=0,\dots,k$,  there are partition of the 
collections $\CC^\ell$ into colorful sets $F_1^\ell,\dots,F_r^\ell$ 
such that there is a $k$-plane that meets all the convex hulls $\conv(F_j^\ell)$, 
under the assumption that $r(d-k)$ is even or $k=0$.

Along the proof we obtain three results of independent interest:
We present two alternative proofs for the special case $k=0$
(our optimal colored Tverberg theorem (2009)), calculate the 
cohomological index for joins of chessboard complexes, and establish a new 
Borsuk--Ulam type theorem for $(\Z_p)^m$-equivariant bundles that generalizes
results of Volovikov (1996) and \zivaljevic\ (1999).
\end{abstract}

\section{Introduction}

In their 1993 paper \cite{TV93} H. Tverberg and S. \vrecica\ presented
a conjectured common generalization of some Tverberg type theorems, 
some ham sandwich type theorems and many intermediate results.
See \cite{Ziv99} for a further collection of implications.
 
\begin{conjecture}[Tverberg--\vrecica\ Conjecture]
\label{conjTV} Let $0\le k\le d$ and let $\CC^0,\dots,\CC^k$ 
be finite point sets in~$\R^d$ of cardinality $|\CC^\ell|=(r_\ell-1)(d-k+1)+1$.
Then one can partition each $\CC^\ell$ into $r_\ell$ sets $F_1^\ell,\dots,F_{r_\ell}^\ell$ 
such that there is a $k$-plane $P$ in $\R^d$ that
intersects all the convex hulls $\conv(F_j^\ell)$, $0\le \ell\le k$, $1\le j\le r_\ell$.
\end{conjecture}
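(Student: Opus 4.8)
The plan is to attack Conjecture~\ref{conjTV} through the configuration-space/test-map paradigm (going back to Tverberg--\vrecica\ and developed further by \zivaljevic\ and others), reducing the partition statement to the non-existence of an equivariant map and then killing that map with the equivariant-cohomology tools announced in the abstract. The geometric reduction is the following. A $k$-flat $P\subseteq\R^d$ is determined by its $(d-k)$-dimensional space of normal directions $\xi\in G_{d-k}(\R^d)$ together with an offset $q\in\R^d/\xi^{\perp}\cong\R^{d-k}$, and $P$ meets $\conv(F_j^\ell)$ precisely when $q$ lies in the convex hull of the image of $F_j^\ell$ under the orthogonal projection $\pi_\xi\colon\R^d\to\R^{d-k}$. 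Since $|\pi_\xi(\CC^\ell)|=(r_\ell-1)((d-k)+1)+1$ is exactly the Tverberg number in dimension $d-k$, the conclusion of the conjecture is equivalent to: for some $\xi$, the $k+1$ projected sets $\pi_\xi(\CC^\ell)$ admit a common Tverberg point $q$, i.e.\ partitions of each $\pi_\xi(\CC^\ell)$ into $r_\ell$ parts whose convex hulls all contain this one $q$. One therefore builds a configuration space that fibers over $G_{d-k}(\R^d)$, with highly connected fibre assembled from the $2$-wise deleted joins $(\Delta_{N_\ell})^{*r_\ell}_{\Delta}\cong[r_\ell]^{*(N_\ell+1)}$ — which are $(N_\ell-1)$-connected, $N_\ell=(r_\ell-1)(d-k+1)$ — each point of such a deleted join encoding a choice of $r_\ell$ pairwise disjoint weighted faces of the simplex on the vertex set $\CC^\ell$. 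The test map records the pairwise differences of the $\pi_\xi$-barycenters of these faces, within each $\CC^\ell$ and relative to the offset; it is equivariant for $G=\Sym_{r_0}\times\cdots\times\Sym_{r_k}$ acting by permuting parts, and a transversal exists if and only if the test map has a zero. Assuming it has none, one obtains a $G$-equivariant bundle map from the configuration space to the sphere bundle $S(E)$ of a $G$-vector bundle $E\to G_{d-k}(\R^d)$ whose fibre over $\xi$ is assembled from $\xi$ (equivalently, from the tautological bundle) tensored with the standard $(r_\ell-1)$-dimensional real representations $W_{r_\ell}$ of the $\Sym_{r_\ell}$.

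To rule out such a map I would restrict to the case where every $r_\ell$ is a power of one fixed prime $p$ and replace $G$ by its diagonal elementary abelian subgroup $(\Z_p)^{k+1}$, which acts freely on the join-of-deleted-joins fibre. The first ingredient is then a lower bound for the cohomological index of the total space, which comes from the connectivity of the deleted joins — or, in the colorful strengthening in which each $(\Delta_{N_\ell})^{*r_\ell}_{\Delta}$ is replaced by a join of chessboard complexes arising from colorful partitions, from the computation of the cohomological index of joins of chessboard complexes, the second result listed in the abstract. The second and decisive ingredient is a Borsuk--Ulam statement over the Grassmannian. When $k=0$ the base $G_{d}(\R^d)$ is a single point, the conjecture is exactly the topological Tverberg theorem, and the classical theorem of Dold for the free $\Z_p$-action finishes it; but for $k\ge1$ the bundle $E$ is genuinely twisted over $G_{d-k}(\R^d)$, and the obstruction to a zero-free equivariant section is the twisted mod-$p$ Euler class $e(E)\in H^{\rank E}_{(\Z_p)^{k+1}}\big(G_{d-k}(\R^d);\FF_p\big)$. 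I would prove — this is the new Borsuk--Ulam type theorem for $(\Z_p)^m$-equivariant bundles generalizing Volovikov (1996) and \zivaljevic\ (1999) — that a free $(\Z_p)^m$-space of sufficiently large cohomological index admits no equivariant bundle map into $S(E)$ once $e(E)\neq 0$, and then compute $e(E)$ as a product, over $\ell$ and over the $\Z_p$-summands, of Euler classes of bundles of the form $W_{r_\ell}\otimes(\text{tautological }(d-k)\text{-bundle})$ over $G_{d-k}(\R^d)$.

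The mod-$p$ non-vanishing of such an Euler class is precisely where the hypothesis ``$r(d-k)$ even, or $k=0$'' is forced: over a point (the case $k=0$) no parity is needed, while for $k\ge1$ a real bundle of the shape $W_r\otimes(\text{taut})$ has rank $(r-1)(d-k)$, and its mod-$p$ Euler class is a nonzero polynomial in the characteristic classes of the tautological bundle over $G_{d-k}(\R^d)$ only when that rank has the right parity — that is, when $r(d-k)$ is even, since Euler classes of odd-rank real bundles are $2$-torsion. Isolating a clean non-vanishing criterion for $e(E)$ and feeding it into the equivariant Borsuk--Ulam machine is the main technical obstacle, and is exactly the content of the third announced result. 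A second obstacle is fundamental rather than technical, and it is why Conjecture~\ref{conjTV} will remain open in full generality after all of the above: the scheme depends on the acting group being elementary abelian and on the index lower bound, and both survive only when every $r_\ell$ is a prime power of one common prime $p$ — for a non-prime-power $r_\ell$ the $(\Z_p)^m$-action on the deleted join is not free, the index collapses, and (as \"Ozaydin's computation already shows for the plain topological Tverberg theorem, $k=0$) the primary obstruction vanishes identically, so the method yields nothing. The realistic output of this plan is therefore Conjecture~\ref{conjTV} for $r_\ell$ that are prime powers of one common prime with $r(d-k)$ even, together with the trivial arithmetic cases, the $k=0$ topological Tverberg case, and the colorful strengthening stated in the abstract; settling the conjecture for arbitrary — in particular composite or mixed — $r_\ell$ would require an idea genuinely outside the configuration-space/test-map framework.
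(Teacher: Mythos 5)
The statement you were given is a \emph{conjecture}, and the paper does not prove it; the paper proves a colorful prime-$r$ special case (the Main Theorem) and cites Karasev for the prime-power case. You correctly recognize this in your last paragraph, and the parts of the plan that are claimed as attainable — prime powers of a fixed prime, with a parity condition, plus the colorful strengthening — match what is actually known. So the scope is right. But the argument you sketch differs structurally from the paper's, and in one place the reason you give for a hypothesis is wrong.

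The paper does not assemble one large test map over the Grassmannian with target bundle $\bigoplus_\ell W_{r_\ell}\otimes\gamma$ and then compute one product Euler class. Instead it decouples the $k+1$ point sets: for each fixed $\ell$ it forms a $\Z_r$-bundle map $B\times K_\ell\to E=\gamma^{\oplus r}$ over $B=G_{d,d-k}$, applies a Borsuk--Ulam type theorem (Theorem~\ref{lemParametrizedBU}) to conclude that $H^*(B)\to H^*(T^\ell)$ is injective for the ``Tverberg set'' $T^\ell\subseteq\Delta=\gamma$, and then invokes a separate Intersection Lemma (Lemma~\ref{lemIntersectionLemma}) to force $T^0\cap\dots\cap T^k\neq\emptyset$. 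The hypothesis of the Intersection Lemma is $e^k\neq 0$ for the mod-$r$ Euler class $e\in H^{d-k}(B)$ of the \emph{diagonal} bundle $\Delta\cong\gamma$ (rank $d-k$), not of the complement bundle $C\cong W_r\otimes\gamma$. This per-$\ell$ decomposition sidesteps the fact that Fadell--Husseini index does not behave well under products (as opposed to joins), which is exactly where a single combined test map of the kind you describe, with domain fibered as a product over $B$, becomes hard to control.

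Your explanation of where the parity condition ``$r(d-k)$ even or $k=0$'' comes from is therefore off target. You attribute it to the rank $(r-1)(d-k)$ of $W_r\otimes\gamma$ being even, but for $r$ odd that rank is always even (as $r-1$ is even), so this would impose no condition at all; and for $r=2$ it would demand $d-k$ even, which is not required (the theorem holds for all $d-k$ when $r=2$). The actual source is $e(\gamma)^k\neq 0$: when $r=2$, $e$ is the top Stiefel--Whitney class $w_{d-k}$ and $w_{d-k}^k$ is the mod-2 fundamental class; when $r$ is odd, the mod-$r$ Euler class of the rank-$(d-k)$ bundle $\gamma$ vanishes if $d-k$ is odd, which is the only obstruction. (The orientability of the complement $C\cong\gamma^{r-1}$, which you gesture at, is in fact automatic for $r$ odd because $r-1$ is even, and automatic mod $2$ for $r=2$.) A further reduction lemma (Lemma~\ref{lemElementaryReductionLemma}) then transfers the case $d,k$ odd to $d,k$ even. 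If you rebuild the argument with the paper's separation into Borsuk--Ulam plus Intersection Lemma, the parity condition falls out of $e(\gamma)^k\neq 0$ and the bookkeeping is much cleaner than tracking the Euler class of the full bundle $E$.
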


\noindent 
The Tverberg--\vrecica\ Conjecture has been verified for
the following special cases:

\medskip

\begin{compactitem}[$\ \ \bullet$]
\item $k=d$ (follows directly),
\item $k=0$ (Tverberg's theorem \cite{Tve66}),
\item $k=d-1$ (Tverberg \& \vrecica\ \cite{TV93}),
\item for $k=d-2$ a weakened version was shown in \cite{TV93} 
(one requires two more points for each~$\CC^\ell$),
\item $k$ and $d$ are odd, and $r_0=\dots=r_k$ is an odd prime (\zivaljevic\ \cite{Ziv99}),
\item $r_0=\dots=r_k=2$ (\vrecica\ \cite{Vre08}), and
\item $r_\ell=p^{a_\ell}$, $a_\ell\ge 0$, for some prime $p$, 
and $p(d-k)$ is even or $k=0$ (Karasev \cite{Kar07}).
\end{compactitem}

\medskip

\noindent
In this paper we consider the following colorful generalization of the
Tverberg--\vrecica\ conjecture.

\begin{conjecture}
\label{conjColoredTV} Let $0\le k\le d$, $r_\ell\ge 2$ $(\ell=0,\dots,k)$  
and let $\CC^\ell$\ $(\ell=0,\dots,k)$ be subsets of $\R^d$ of
cardinality $|\CC^\ell|=(r_\ell-1)(d-k+1)+1$. Let the $\CC^\ell$ be colored,
\[  
C^\ell\ =\ \biguplus C^\ell_i,
\]
such that no color class is too large, $|C^\ell_i|\le r_\ell-1$. Then we
can partition each $C^\ell$ into sets $F_1^\ell,\dots,F_{r_\ell}^\ell$
that are \emph{colorful} $($in the sense that $|C_i^\ell\cap F_j^\ell|\le1$ 
for all $i,j,\ell)$ and find a
$k$-plane $P$ that intersects all the convex hulls
$\conv(F_j^\ell)$.
\end{conjecture}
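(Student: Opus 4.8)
We prove Conjecture~\ref{conjColoredTV} in the case $r_0=\dots=r_k=r$ with $r$ prime and $r(d-k)$ even or $k=0$; here is the plan. The idea is to run the configuration-space/test-map scheme in the shape that has become standard for Tverberg--\vrecica\ type problems, with exactly the three refinements that reappear as auxiliary results in the abstract: the deleted joins of simplices that encode ordinary Tverberg partitions are replaced by \emph{joins of chessboard complexes} encoding \emph{colorful} partitions; the topological obstruction is promoted from a map of $\Z_r$-spaces to a nowhere-zero $\Z_r$-equivariant section of a vector bundle over a Grassmann manifold of $k$-planes; and the non-existence of such a section is forced by a new Borsuk--Ulam theorem for $(\Z_p)^m$-equivariant bundles. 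Since $r$ is prime we may restrict the acting group from $\Sym_r$ to $\Z_r$ throughout, and after padding we assume $|\CC^\ell|=(r-1)(d-k+1)+1$ with color classes $C_i^\ell$ of sizes $m_i^\ell\le r-1$.

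For a fixed linear $k$-subspace $L\subset\R^d$, write $\pi_L\colon\R^d\to\R^d/L\cong\R^{d-k}$ for the projection. A $k$-plane with direction $L$ meets all the hulls $\conv(F_j^\ell)$ if and only if, for each $\ell$, the projected families $\pi_L(F_1^\ell),\dots,\pi_L(F_r^\ell)$ have a common point, and these $k+1$ common points coincide; since $(r-1)(d-k+1)+1$ is exactly the colored Tverberg cardinality in $\R^{d-k}$, for each fixed $L$ this is a ``common colored Tverberg point'' problem. Accordingly I would take the configuration space to be the join $\mathcal C=X^0*\dots*X^k$, where $X^\ell=\Delta_{m_1^\ell,r}*\dots*\Delta_{m_{c_\ell}^\ell,r}$ is the complex of colorful partial $r$-matchings of $\CC^\ell$ (one chessboard-complex factor per color class), a complex of dimension $(r-1)(d-k+1)$. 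Because no color class has $r$ elements, $\Z_r$ acts freely on each $X^\ell$, hence freely on $\mathcal C$. Letting $L$ range over the oriented Grassmannian $\widetilde G_k(\R^d)$ and recording, for a point of $\mathcal C$ and a plane $L$, the $\pi_L$-projected block-barycenters with their weights together with the conditions that the $k+1$ resulting points agree, packages into a $\Z_r$-equivariant section $s$ of a vector bundle $E\to\widetilde G_k(\R^d)$ assembled from copies of $W_r=\{x\in\R^r:\sum x_i=0\}$, of the tautological bundle $\gamma$ and its complement $\gamma^\perp$, and of trivial bundles; a zero of $s$ is exactly a system of colorful partitions with a transversal $k$-plane, so it suffices to prove that $E$ has no $\Z_r$-equivariant nowhere-zero section.

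When $k=0$ the Grassmannian is a point, $E$ is a constant $\Z_r$-representation, and the claim is the optimal colored Tverberg theorem; I would prove this case twice --- once by computing the $\Z_r$-cohomological index of the join of chessboard complexes $\mathcal C$ (which I would establish in its own section, the outcome being that $\Index{\Z_r}(\mathcal C)$ equals $\dim\mathcal C$, as large as the dimension permits), and once by a deleted-join / Sarkaria-type reduction. For $k\ge1$ the work is to rule out a nowhere-zero $\Z_r$-equivariant section of $E$, i.e.\ to show that the $\Z_r$-equivariant Euler class of $E$ does not vanish over $E\Z_r\times_{\Z_r}(\mathcal C\times\widetilde G_k(\R^d))$; here one feeds the index bound for $\mathcal C$ together with the known structure of $H^*_{\Z_r}(\widetilde G_k(\R^d))$ into the new equivariant-bundle Borsuk--Ulam theorem, which reduces the non-vanishing to a characteristic-class statement about the bundle $\bigoplus_\ell W_r\otimes\gamma^\perp$. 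The hypothesis ``$r(d-k)$ even or $k=0$'' enters precisely there: it is exactly the condition under which that characteristic-class computation does not degenerate modulo $r$ (for $r$ odd it forces $d-k$ even, so that a genuine, non-$2$-torsion Euler class of $\gamma^\perp$ is available; for $r=2$ the mod-$2$ computation always goes through) --- the same obstruction that appears in Karasev's version of the Tverberg--\vrecica\ theorem. The main obstacle I anticipate is this last step: formulating and proving the $(\Z_p)^m$-equivariant Borsuk--Ulam theorem at the generality needed to subsume the theorems of Volovikov and \zivaljevic, and then carrying out the Euler-class non-vanishing while tracking the parity obstruction; the chessboard-complex index computation is a secondary, more combinatorial, hurdle, and the $k=0$ case serves as a warm-up.
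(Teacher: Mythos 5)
What you outline addresses only the special case $r_0=\dots=r_k=r$ prime with $r(d-k)$ even or $k=0$, i.e.\ the paper's Main Theorem~\ref{thmColoredTV}; Conjecture~\ref{conjColoredTV} itself remains open, so no complete proof should be expected. Within that case you have correctly identified the three ingredients (joins of chessboard complexes as configuration spaces, the computation of their Fadell--Husseini index, and a Borsuk--Ulam theorem for equivariant bundles over a Grassmannian), and you locate the parity hypothesis correctly: it is needed so that the mod-$r$ Euler class $e$ of the tautological $(d-k)$-plane bundle satisfies $e^k\neq 0$. The gap is in how you assemble these pieces at the end.

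You propose a single configuration space $\mathcal C=X^0*\dots*X^k$ (joined over $\ell$) and a single test bundle whose $\Z_r$-equivariant Euler class is to be shown nonzero. Two parts of that bundle necessarily carry the trivial $\Z_r$-representation: the $k$ ``agreement'' conditions identifying the $k+1$ common points (a copy of $\gamma^{\oplus k}$), and the conditions on the join coordinates $t_0,\dots,t_k$ of $\mathcal C$, which form a genuinely trivial bundle of rank $k$ with vanishing Euler class. Hence the Euler class of your total bundle is zero and the one-shot argument returns nothing; and no Borsuk--Ulam theorem of the kind you invoke can reduce the problem to a characteristic-class statement about $\bigoplus_\ell W_r\otimes\gamma^\perp$ alone, because the real obstruction lives partly in the fixed part. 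The paper's resolution is structural and is the idea missing from your plan: do \emph{not} join over $\ell$. For each $\ell$ separately one takes $K^\ell=\Delta_{r,|C_0^\ell|}*\dots*\Delta_{r,|C_{m_\ell}^\ell|}$ and the bundle map $B\times K^\ell\to\gamma^{\oplus r}$ over $B=G_{d,d-k}$, and the Borsuk--Ulam type Theorem~\ref{lemParametrizedBU} is used only to conclude that $H^*(B)\to H^*(T^\ell)$ is injective, where $T^\ell$ is the image of the preimage of the fixed-point subbundle $\Delta\iso\gamma$ (the set of colored Tverberg points of all projections of $\CC^\ell$); the input here is exactly your index computation $\ind_{\Z_r}^\pt(K^\ell)=H^{*\ge |\CC^\ell|}(B\Z_r)$ against the rank $(r-1)(d-k)$ of the complement of $\Delta$. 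The trivial-representation part is then handled by a separate, purely geometric Intersection Lemma~\ref{lemIntersectionLemma}: since $e(\gamma)^k\neq 0$ and each $T^\ell$ covers $B$ in cohomology, $T^0\cap\dots\cap T^k\neq\emptyset$. Without this two-step decomposition (preimage of the fixed subbundle, then intersection inside $\gamma$), your final Euler-class step fails as stated. Two minor inaccuracies: the index of a join of chessboard complexes is pinned down by sandwiching it against $(\Delta_{r,r-1})^{*(m+1)}$, whose index is known from the degree argument, not by a Sarkaria-type reduction; and the paper's two proofs for $k=0$ are a degree argument and the index argument, not a deleted-join reduction.
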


The Tverberg--\vrecica\ Conjecture \ref{conjTV} is the special case of the 
previous conjecture when all color classes are given by singletons. 
The main result of this paper is the following special case.

\begin{theorem} [Main Theorem]
\label{thmColoredTV} Let $r$ be prime and $0\le k\le d$ such that
$r(d-k)$ is even or $k=0$. Let~$\CC^\ell$\ $(\ell=0,\dots,k)$ be subsets of
$\R^d$ of cardinality $|\CC^\ell|=(r-1)(d-k+1)+1$. Let the $\CC^\ell$ be
colored,
\[
\CC^\ell\ =\ \biguplus C_i^\ell,
\]
such that no color class is too large, $|C_i^\ell|\le r-1$. Then we
can partition each $\CC^\ell$ into colorful sets $F_1^\ell,\dots,F_r^\ell$
and find a $k$-plane $P$ that intersects all the convex hulls
$\conv(F_j^\ell)$.
\end{theorem}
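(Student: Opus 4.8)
The plan is to follow the standard configuration-space/test-map scheme for Tverberg-type theorems, but set up so that the relevant test map lands on a join of chessboard complexes and so that the Borsuk–Ulam type theorem for $(\Z_p)^m$-equivariant bundles can be applied with the optimal index estimate.

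The plan is to prove Theorem~\ref{thmColoredTV} by the configuration space / test map scheme. The uncolored version is Karasev's theorem \cite{Kar07}; the new ingredient here is the colorful constraint, which turns each ``Tverberg configuration space'' from a join of discrete point sets into a join of chessboard complexes, and this is exactly what brings the two announced auxiliary results into play. \textbf{Dualization.} A $k$-plane $P\subseteq\R^d$ meets $\conv(F)$ if and only if $0\in\conv(f(F))$, where $f\colon\R^d\to\R^{d-k}$ is an affine map of rank $d-k$ with $f^{-1}(0)=P$. So it suffices to produce an affine map $f$ of rank $d-k$ together with colorful $r$-partitions $F_1^\ell,\dots,F_r^\ell$ of $\CC^\ell$ with $0\in\conv(f(F_j^\ell))$ for all $j,\ell$. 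For a \emph{fixed} $f$, running the usual Tverberg test map on each $\CC^\ell$ only produces colorful partitions whose $f$-images have a common point $c^\ell\in\R^{d-k}$ --- but these $k+1$ points need neither vanish nor coincide, so a single $k$-plane transversal is obtained only after letting $f$ vary so as to align them; handling this variation (away from degenerate $f$) is the genuinely new topological difficulty and the source of the parity hypothesis.

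\textbf{Configuration space, test map, index.} For each $\ell$, the space of colorful $r$-partition data of $\CC^\ell=\biguplus_i C_i^\ell$ (each point either ``unused'' or carrying a part label in $[r]$, distinct labels inside each color class, plus convex weights summing to $1$) is the join of chessboard complexes $K^\ell:=\Delta_{|C_1^\ell|,r}\ast\Delta_{|C_2^\ell|,r}\ast\cdots$, one factor per color class, where $\Delta_{m,n}$ is the complex of partial matchings of $K_{m,n}$; since $|C_i^\ell|\le r-1$ each factor is nonempty of dimension $|C_i^\ell|-1$, so $\dim K^\ell=|\CC^\ell|-1=(r-1)(d-k+1)$. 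Take as configuration space a suitable combination $X$ of the $K^\ell$ (e.g.\ their product, letting $(\Z_r)^{k+1}$ act factorwise, or their join with a diagonal $\Z_r$), together with a compact parameter space $Y$ of affine maps $f$; with $G$ the relevant elementary abelian $p$-group cyclically permuting the $r$ part labels the action is free, since $r$ is prime. Over $Y$ there is a $G$-equivariant vector bundle whose fibre over $(x,y)\in X\times Y$ records, for each $\ell$, the data forcing the parts $F_j^\ell$ to have coincident $f$-images --- an $\ell$-block $W_r\otimes\R^{d-k+1}$, with $W_r$ the cyclically permuted $(r-1)$-dimensional representation --- together with a block forcing the resulting $k+1$ coincidence points to align into one $k$-plane; the latter block genuinely twists over $Y$, and hitting the zero section is equivalent to the desired transversal. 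The source-side input is a lower bound on the cohomological index: by superadditivity under the product/join this reduces to $\Index{\Z_r}$ of a single join of chessboard complexes $K^\ell$ --- the computation announced in the introduction --- whose outcome should be that it equals $\dim K^\ell=(r-1)(d-k+1)$, \emph{strictly exceeding} the connectivity bound ($\Delta_{r-1,r}$ is only $\approx\tfrac23 r$-connected, far from $(r-3)$-connected), so the usual Dold/connectivity argument is too weak; instead one computes $H^*(\Delta_{m,r};\FF_r)$ as a module over $H^*(\Z_r;\FF_r)$ on an explicit $\Z_r$-regular cell structure and assembles.

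\textbf{Obstruction and the parity hypothesis.} Absence of a transversal would give a nowhere-zero $G$-equivariant section of the above bundle over $X\times Y$, hence a $G$-equivariant fibrewise map into its sphere bundle; the announced Borsuk--Ulam type theorem for $(\Z_p)^m$-equivariant bundles (generalizing Volovikov's 1996 theorem and \zivaljevic~\cite{Ziv99}) forbids this once the index of $X$ is as large as above --- a contradiction, so the test map must vanish somewhere, producing the desired non-degenerate $f$ and the colorful partitions. The parity hypothesis $r(d-k)$ even (needed only for $k\ge1$) enters exactly here: the alignment block twists nontrivially over the space $Y$ of affine maps, and the obstruction must be detected off the rank-deficient locus, so the Euler-class type argument inside the bundle theorem requires this block to be equivariantly orientable --- which is precisely the condition $r(d-k)$ even. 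When $k=0$ there are no coincidence points to align, $Y$ is a point, there is no bundle, the action on the target sphere is free, and the argument collapses to a (new) proof of the optimal colored Tverberg theorem, with no parity restriction --- as the statement asserts.

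\textbf{Main obstacle.} I expect the crux to be the index of joins of chessboard complexes: these complexes are genuinely less connected than spheres of the same dimension, so their joins' index has to be pinned down by an explicit $\FF_r$-cohomology computation rather than by connectivity, and this is the real technical heart. The second, inseparable difficulty is to formulate and prove the bundle version of the Borsuk--Ulam/Volovikov theorem in exactly the needed generality (arbitrary $(\Z_p)^m$, equivariant vector bundles over a nontrivial base), together with the orientability bookkeeping that produces the parity hypothesis and the control of the rank-deficient locus.
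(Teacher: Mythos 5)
Your outline matches the paper's architecture in its two key ingredients --- the cohomological index of a join of chessboard complexes (which indeed must be pinned down beyond connectivity, since the join's index equals the dimension bound while its connectivity falls short) and a Borsuk--Ulam type theorem for $(\Z_p)^m$-equivariant bundles over the space of projections (the paper uses the Grassmannian $G_{d,d-k}$ with its tautological bundle $\gamma$ as the base, rather than a space of affine maps). But the step where the $k+1$ separate colored Tverberg problems are tied into a single transversal is where your plan has a genuine gap. You propose one big configuration space (product or join of the $K^\ell$) mapping to a single bundle containing an ``alignment block'' whose vanishing forces the $k+1$ coincidence points to agree. As stated this does not work: for a fixed projection the colored Tverberg point of $\CC^\ell$ is neither unique nor continuously dependent on the projection, so the ``$k+1$ coincidence points'' do not assemble into a section of any bundle over the parameter space whose zero set you could detect by an Euler-class computation. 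The paper avoids this by never combining the $K^\ell$: it applies the bundle Borsuk--Ulam theorem once for each $\ell$ to the bundle map $B\times K^\ell\to\gamma^{\oplus r}$, concluding only that the \emph{set} $T^\ell=\im(M)\cap\Delta$ of Tverberg points over all projections satisfies that $H^*(B;\Z_r)\to H^*(T^\ell;\Z_r)$ is injective (in \v{C}ech cohomology), and then invokes a separate Intersection Lemma --- requiring $e(\gamma)^k\neq 0$ in $H^*(G_{d,d-k};\Z_r)$ --- to force $T^0\cap\dots\cap T^k\neq\emptyset$. That set-valued, two-stage mechanism is the missing idea.

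Relatedly, you locate the parity hypothesis in the equivariant orientability of the twisting block inside the bundle theorem; in fact the orientability needed for the bundle theorem (of the complement bundle $C\simeq\gamma^{\oplus(r-1)}$ stably) holds automatically for $r$ odd since $r-1$ is even. The condition ``$r(d-k)$ even'' is really needed for the Intersection Lemma: when $r$ is odd and $d-k$ is odd the mod-$r$ Euler class of $\gamma$ vanishes. Even granting $d-k$ even, $e^k\neq 0$ is only available (via \zivaljevic's computation) for $d,k$ odd, so the case $r$ odd with $d,k$ even requires an additional elementary reduction lemma (prove the statement for $d+1,k+1$ and slice with a hyperplane); your sketch does not account for this. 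The $k=0$ collapse and the identification of the index computation as the technical heart are both correct.
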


In Section \ref{secNewColTVThm} we will see that this theorem is quite tight 
in the sense that it becomes false if one single color class $C_i^\ell$ 
has $r_\ell$ elements and all the other ones are singletons.

Since we will prove Theorem \ref{thmColoredTV} topologically it has a 
natural topological extension, Theorem~\ref{thmTopColoredTV}.

Recently we had obtained the first case $k=0$ using 
equivariant obstruction theory \cite{BMZ09}.
In Section \ref{secNewColTopTverThmRevisited} we present two alternative proofs, 
based on the configuration space/test map scheme from \cite{BMZ09}.
The first one is more elementary and shorter; it uses a degree argument.
The second proof puts the first one into the language of cohomological index theory.
For this, we calculate the cohomological index of a join of chessboard complexes.
This allows for a more direct proof of the case $k=0$, which is the first 
of two keys for the Main Theorem~\ref{thmColoredTV}.

The second key is a new Borsuk--Ulam type theorem for equivariant bundles.
We establish it in Section~\ref{secNewBUThm}, and prove the Main Theorem 
in Section~\ref{secNewColTVThm}.
The new Borsuk--Ulam type theorem can also be applied to obtain an 
alternative proof of Karasev's above-mentioned result; see Section~\ref{secNewColTVThm}.
Karasev has also obtained a colored version of the Tverberg--\vrecica\ conjecture, 
different from ours, even for prime powers, which can also alternatively be obtained 
from our new Borsuk--Ulam type theorem. 
 
\section{The topological colored Tverberg problem revisited} \label{secNewColTopTverThmRevisited}

In \cite{BMZ09} we have shown the following new colored version of
the topological Tverberg theorem. It is the special case $k=0$ 
of the Topological Main Theorem \ref{thmTopColoredTV}.

\begin{theorem}[\cite{BMZ09}]
\label{thmNewColTopTverThm} Let $r\ge 2$ be prime, $d\ge 1$, and
$N:=(r-1)(d+1)$. Let $\simplex_N$ be an $N$-dimensional simplex with
a partition of the vertex set into \qm{color classes}
$C_0,\dots,C_m$ such that $|C_i|\le r-1$ for all $i$.

Then for every continuous map $f:\simplex_N\to \R^d$ there are $r$
disjoint faces $F_1,\dots,F_r$ of $\sigma_N$ that are colorful
$($that is, $|C_i\cap F_j|\le 1)$ such that
\[
f(F_1)\cap\dots\cap f(F_r)\neq\emptyset.
\]
\end{theorem}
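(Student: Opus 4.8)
The plan is to use the standard configuration space/test map scheme, adapted to the colored setting as in \cite{BMZ09}. First I would set up the configuration space: rather than the deleted join of the full simplex $\simplex_N$, one restricts to the subcomplex of colorful $r$-tuples of disjoint faces. Concretely, for each color class $C_i$ of size $n_i\le r-1$, the colorful faces supported on $C_i$, together with the ``$r$-fold disjointness'' requirement, are parametrized by the chessboard complex $\Delta_{n_i,r}$ (an $n_i\times r$ chessboard complex, whose vertices are the cells and whose faces are the non-attacking rook placements); the relevant configuration space is then the join
\[
K\ :=\ \Delta_{n_0,r}\Ast\Delta_{n_1,r}\Ast\cdots\Ast\Delta_{n_m,r},
\]
which carries a free (away from the diagonal) action of $\Sym_r$, restricted here to the cyclic group $\Z_r$. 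A continuous map $f:\simplex_N\to\R^d$ induces, via the test map $x\mapsto (f(x_1)-\bar f,\dots,f(x_r)-\bar f)$ where $\bar f$ is the average, a $\Z_r$-equivariant map $K\to W_r^{\oplus d}$ into the $d$-fold sum of the standard $(r-1)$-dimensional representation; a Tverberg partition exists precisely when this map has a zero, so it suffices to show no $\Z_r$-equivariant map $K\to S(W_r^{\oplus d})$ to the unit sphere exists.

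The heart of the argument is a connectivity/index computation. Each $\Delta_{n_i,r}$ with $n_i\le r-1$ is known (Björner--Lovász--Vrećica--Živaljević) to be $(n_i-2)$-connected, hence $\dim$-connected up to $n_i-1$; summing cardinalities, $\sum n_i = N+1 = (r-1)(d+1)$, the join $K$ is $(N-1)$-connected and $N$-dimensional, i.e.\ it has the $\Z_r$-homotopy type one needs. The cleanest route — and the one the excerpt announces will be carried out twice — is: (first proof) a direct degree argument, showing that the restriction of the test map to the top cells already forces a zero by a mod-$r$ count of colorful Tverberg partitions, so one reduces to checking that the number of such partitions for an affine ``generic'' target map is $\not\equiv 0 \pmod r$; (second proof) compute the $\Z_r$-cohomological index of $K$ and observe it is at least as large as that of $S(W_r^{\oplus d})$, which is the obstruction. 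For the index computation one uses that the index of a join is (sub)additive and that $\Index{\Z_r}(\Delta_{n_i,r})$ is as large as possible when $n_i\le r-1$; this is exactly the ``cohomological index of a join of chessboard complexes'' result flagged in the introduction.

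The main obstacle is the degree/index bookkeeping at the top dimension: one must verify that the relevant equivariant obstruction class in $H^N_{\Z_r}(K;\underline{\pi_{N-1}(S(W_r^{\oplus d}))})$ is nonzero, equivalently that a concrete affine model (e.g.\ points in general position, or the moment-curve configuration) admits a number of colorful Tverberg $r$-partitions that is nonzero modulo $r$ — and crucially that this count is independent of the chosen coloring as long as $|C_i|\le r-1$. This is where the primality of $r$ enters decisively (so that $\Z_r$ acts freely off the diagonal and mod-$r$ Euler-class arguments apply), and where one must be careful that passing from $\Sym_r$ to $\Z_r$ does not lose the needed information. Once the mod-$r$ count is pinned down — either by an explicit combinatorial evaluation for a well-chosen $f$, or by recognizing the obstruction as a power of the Euler class of $W_r$ pulled back along a map of the correct degree — the theorem follows, and this same computation feeds directly into the proof of the Main Theorem \ref{thmColoredTV}.
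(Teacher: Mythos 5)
There is a genuine gap at the heart of your plan: the connectivity claim for the chessboard complexes is false. The complex $\Delta_{r,n_i}$ (equivalently $[r]^{*n_i}_{\Delta(2)}$) is \emph{not} $(n_i-2)$-connected for $n_i\le r-1$ in general; the Bj\"orner--Lov\'asz--\vrecica--\zivaljevic\ bound gives $(\nu-2)$-connectivity only for $\nu=\min\{n_i,\,r,\,\lfloor(n_i+r+1)/3\rfloor\}$, and this bound is known to be tight. For example $\Delta_{7,6}$ is only $2$-connected, not $4$-connected. Hence the join $K$ is \emph{not} $(N-1)$-connected, and its index cannot be read off from connectivity; if it could, the colored Tverberg problem would follow from Dold's theorem and would not have been open. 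This is precisely the obstacle the paper's two proofs are built to circumvent: the first proof does not use connectivity at all, but instead (i) invokes the reduction lemma to the special case $m=d+1$, $|C_0|=\dots=|C_d|=r-1$, $|C_{d+1}|=1$, so that $K=K'*[r]$ with $K'=(\Delta_{r,r-1})^{*(d+1)}$ a connected orientable \emph{pseudomanifold} of dimension exactly $N-1=\dim S(W_r^{d+1})$, and (ii) computes $\deg(M)=\pm(r-1)!^{d+1}\equiv\pm1\pmod r$ by an explicit preimage count for an affine model (Wilson's theorem), concluding that $M$ is not null-homotopic and hence does not extend over $K'*[1]$. The second proof then bootstraps the index of \emph{general} joins $\Delta_{r,c_0}*\dots*\Delta_{r,c_m}$ from this degree computation via a spectral-sequence comparison and a dimension-count sandwich argument, again with no connectivity input. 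Your proposal names the degree argument as an option but does not supply either the reduction to the pseudomanifold case or the mod-$r$ preimage count, and the index route you sketch rests on the false connectivity statement.

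A secondary point: with the join-type configuration space $K$ the test map must also record the barycentric coordinates $\lambda_1,\dots,\lambda_r$, so its target is $(W_r)^{d+1}$ and the relevant sphere is $S(W_r^{d+1})$ of dimension $N-1$; your target $W_r^{\oplus d}$ (subtracting the average of the $f(x_j)$) belongs to the deleted-product setup and makes the dimensions in the degree argument fail to match.
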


\noindent
This implies the optimal colored Tverberg theorem (the \barany--Larman conjecture) for primes
minus one, even its topological extension. 
This conjecture being proven implies new complexity bounds in computational geometry; 
see the introduction of \cite{BMZ09} for three examples.

In this section we present two new proofs of Theorem \ref{thmNewColTopTverThm}. 
The first one uses an elementary degree argument. 
The second proof puts the first one into the language of cohomological index theory, 
as developped by Fadell and Husseini \cite{FH88}. 
Even though the second proof looks more difficult it actually allows 
for a more direct path, 
since it avoids the non-topological reduction of Lemma \ref{lemReductionToThm21}.
This requires more index calculations, which however are valuable since
they provide a first key step towards our 
proof of the Main Theorem \ref{thmColoredTV} in Section~\ref{secNewColTVThm}. 

\subsubsection*{The configuration space/test map scheme}

Suppose we are given a continuous map
\[
f:\simplex_N\To \R^d
\]
and a coloring of the vertex set $\vertices(\simplex_N)=[N+1]=C_0\uplus\dots\uplus C_m$ 
such that $|C_i|\le r-1$.
We want to find a colored Tverberg partition $F_1,\dots,F_r$.

As in \cite{BMZ09} we construct a test-map $F$ out of $f$.
Let $f^{*r}:(\simplex_N)^{*r}\toG{\Z_r} (\R^d)^{*r}$ be the $r$-fold join of $f$.
Since we are interested in pairwise disjoint faces $F_1,\dots,F_r$, 
we restrict the domain of $f^{*r}$ to the $r$-fold $2$-wise deleted join of $\simplex_N$,
$(\simplex_N)^{*r}_{\Delta (2)} = [r]^{*(N+1)}$.
(See \cite{Mat03} for an introduction to these notions.)
Since we are interested in colorful $F_j$s, we restrict the domain further 
to the subcomplex $[r]^{*|C_0|}_{\Delta(2)}*\dots*[r]^{*|C_m|}_{\Delta(2)}$.
The space $[n]^{*m}_{\Delta(2)}$ is known as the \emph{chessboard complex} $\Delta_{n,m}$.
We write
\begin{equation}
\label{eqDefOfTestSpaceK}
K:=(\Delta_{r,|C_0|}) * \dots * (\Delta_{r,|C_m|}).
\end{equation}
Hence we get a map
\[
F'': K \toG{\Z_r} (\R^d)^{*r}.
\]
Let $\R[\Z_r]\iso\R^r$ be the regular representation of $\Z_r$
and $W_r\subseteq \R^r$ the orthogonal complement of the all-one vector $\one=e_1+\dots+e_r$. 
The orthogonal projection $p:\R^r\toG{\Z_r} W_r$ yields a map
\[
\begin{array}{lll}
(\R^d)^{*r} &\toG{\Z_r}\ & (W_r)^{d+1}\\
\sum_{j=1}^r \lambda_j x_j &\longmapsto &
 (p(\lambda_1,\dots,\lambda_r),p(\lambda_1x_{1,1},\dots,\lambda_rx_{r,1}),\dots,
p(\lambda_1x_{1,d},\dots,\lambda_rx_{r,d}).
\end{array}
\]
The composition of this map with $F''$ gives us the test-map $F'$,
\[
F': K \toG{\Z_r} (W_r)^{d+1}.
\]
The pre-images $(F')^{-1}(0)$ of zero correspond exactly to the 
colored Tverberg partitions.
Hence the image of $F'$ contains $0$ if and only if the map $f$ 
admits a colored Tverberg partition.
Suppose that $0$ is not in the image, then we get a map
\begin{equation}
\label{eqTestmapF}
F: K \toG{\Z_r} S((W_r)^{d+1})
\end{equation}
into the representation sphere by composing $F'$ with the radial projection map.
We will derive contradictions to the existence of such an equivariant map.

The first proof establishes a key special case of 
Theorem \ref{thmNewColTopTverThm}, which implies the general result
by the following reduction. 

\begin{lemma}[\cite{BMZ09}]
\label{lemReductionToThm21}
It suffices to prove Theorem \ref{thmNewColTopTverThm} for
 $m=d+1$, $|C_0|=\dots=|C_d|=r-1$ and $|C_{d+1}|=1$.
\end{lemma}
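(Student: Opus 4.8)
The plan is to show that the general statement of Theorem~\ref{thmNewColTopTverThm} follows from the special case where $m=d+1$, $|C_0|=\dots=|C_d|=r-1$, and $|C_{d+1}|=1$, so that the total number of vertices is exactly $N+1=(r-1)(d+1)+1$. First I would observe that, given an arbitrary coloring $C_0,\dots,C_m$ with $|C_i|\le r-1$ and $\sum_i|C_i|=N+1$, one can always refine and regroup the color classes so as to bring them into this normal form. Concretely, since $\sum_i|C_i|=(r-1)(d+1)+1$, a simple counting/greedy argument lets us repartition the vertex set $[N+1]$ into $d+2$ new color classes $C_0',\dots,C_{d+1}'$ with $|C_0'|=\dots=|C_d'|=r-1$ and $|C_{d+1}'|=1$, \emph{in such a way that each new class $C_j'$ is a union of pieces, each piece lying inside a single old class $C_i$} --- equivalently, the new coloring is a \emph{coarsening} of the old one after possibly splitting old classes. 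The key point is that splitting an old color class into smaller classes only makes the colorfulness condition harder to satisfy, while merging classes (never merging two vertices that shared an old color) keeps a colorful-for-the-new-coloring partition colorful for the old one as well. Actually the cleanest route is: since each $|C_i|\le r-1$, we may first split every $C_i$ into singletons, obtaining the all-singletons coloring for which $|C_i|=1\le r-1$ trivially; but this loses the colorfulness advantage, so instead we merge singletons carefully.

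So here is the argument I would actually write. Suppose Theorem~\ref{thmNewColTopTverThm} holds in the special form of the Lemma. Given a general coloring $C_0,\dots,C_m$ of $\vertices(\simplex_N)$ with each $|C_i|\le r-1$, I want to produce a new coloring $\widehat C_0,\dots,\widehat C_{d+1}$ with $|\widehat C_0|=\dots=|\widehat C_d|=r-1$, $|\widehat C_{d+1}|=1$, such that every $\widehat C_j$ is contained in a single old class $C_i$. If such a refinement exists, then any partition $F_1,\dots,F_r$ that is colorful for $(\widehat C_j)$ (that is, $|\widehat C_j\cap F_\ell|\le 1$) is automatically colorful for $(C_i)$: indeed if $|C_i\cap F_\ell|\ge 2$ then, since $C_i=\bigcup\{\widehat C_j : \widehat C_j\subseteq C_i\}$, the set $C_i$ decomposes into $\widehat C_j$'s, but $C_i$ has at most $r-1$ elements, hence is itself a union of at most $r-1$ singletons among the $\widehat C_j$... wait, that forces $|\widehat C_j\cap C_i|$-reasoning. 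The correct statement: a partition colorful for a \emph{finer} coloring need not be colorful for the coarser one. So I have the direction of refinement backwards, and the right move is to go the \emph{other} way: we want $(C_i)$ to be a refinement of $(\widehat C_j)$, i.e. each old $C_i$ is contained in some new $\widehat C_j$. Then colorfulness for $(\widehat C_j)$ implies colorfulness for $(C_i)$, since $C_i\subseteq \widehat C_j$ gives $|C_i\cap F_\ell|\le|\widehat C_j\cap F_\ell|\le1$.

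Therefore the task reduces to the following purely combinatorial claim: \emph{given sets $C_0,\dots,C_m$ partitioning an $((r-1)(d+1)+1)$-element set with each $|C_i|\le r-1$, one can group them into $d+2$ bins $\widehat C_0,\dots,\widehat C_{d+1}$ (each $\widehat C_j$ a union of some of the $C_i$) with $|\widehat C_0|=\dots=|\widehat C_d|=r-1$ and $|\widehat C_{d+1}|=1$.} This is a bin-packing statement that I would prove by a straightforward greedy argument: process the classes $C_i$ in decreasing order of size and fill the bins of capacity $r-1$ one at a time; because every item has size $\le r-1$, a first-fit style packing never strands a bin with more than $r-2$ empty slots while items remain that could fit, and a short case analysis (or induction on $m$) shows the total $(r-1)(d+1)+1$ packs into exactly $d+1$ bins of size $r-1$ plus one leftover element. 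The main obstacle is making this packing argument airtight --- in particular ruling out the bad scenario where the greedy leaves several bins partially filled and no single item fits any of them; this is handled by the observation that two partially-filled bins of capacity $r-1$ together have at least $r$ free slots, so any remaining item (size $\le r-1$) fits, hence at each stage at most one bin is partially filled, which forces the clean final configuration. Once the combinatorial claim is established, applying the special case of the theorem to $f$ with the coloring $(\widehat C_j)$ and invoking the colorfulness implication above finishes the proof.
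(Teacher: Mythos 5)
Your strategy---keep the dimension $d$ fixed and \emph{coarsen} the given coloring into $d+1$ bins of size exactly $r-1$ plus one singleton, each bin a union of old color classes---rests on a combinatorial claim that is false. Take $r=4$, $d=1$, so $N+1=7$, and color classes of sizes $2,2,2,1$ (all $\le r-1=3$). The target bin of size $1$ can only be the old singleton, so the three classes of size $2$ would have to be grouped into two bins each of total size $3$; but any union of size-$2$ classes has even cardinality, so this is impossible. (Your greedy/first-fit argument also contains a false step: two partially filled bins of capacity $r-1$ have at most $2(r-2)$ and as few as $2$ free slots, not ``at least $r$'', so the invariant ``at most one bin is partially filled'' does not hold.) You did correctly sort out the direction of the colorfulness transfer---only a genuine coarsening (each $C_i$ contained in some $\widehat C_j$) lets a partition colorful for $(\widehat C_j)$ be colorful for $(C_i)$---but precisely because only coarsening is admissible, the parity obstruction above kills the approach.

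The actual reduction goes in the opposite direction: one \emph{adds points and raises the dimension} rather than merging classes in a fixed dimension. Since $|C_i|\le r-1$ and $\sum_i|C_i|=(r-1)(d+1)+1$, one necessarily has $m\ge d+1$. One adjoins $r-1-|C_i|$ new vertices to each class $C_i$ and one further vertex forming a new singleton class, for a total of $(r-1)(m+1)+1$ vertices colored in the normal form for dimension $m$; the map $f$ is extended so that the new vertices land in general position in $\R^{m}\setminus\R^{d}$ (e.g.\ near the extra basis vectors). Applying the special case in $\R^{m}$ produces a colorful partition whose common point, by an affine-coefficient argument, must lie in $\R^{d}$ and cannot use any of the added vertices, so deleting them yields a colorful Tverberg partition of the original configuration. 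This is exactly the $k=0$ instance of the Reduction Lemma \ref{lemElementaryReductionLemma} later in the paper, where a cluster of new points is placed near $(0,\dots,0,1)$ one dimension up. You would need to replace your bin-packing step with this point-adding, dimension-raising construction.
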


\noindent
For the elementary proof of this lemma see 
\cite[Reduction of Thm.~2.1 to Thm.~2.2]{BMZ09}. 
This lemma is the special case $k=0$ of Lemma \ref{lemElementaryReductionLemma}, which we prove later.

Therefore we can assume that $K=K' *[r]$ and $K'=(\Delta_{r,r-1})^{*(d+1)}$.
Let $M$ be the restriction of~$F$ to~$K'$.  
The chessboard complex $\Delta_{r,r-1}$ is a connected orientable 
pseudo-manifold, hence $K'$ is one as well.
The dimensions $\dim K'=N-1=\dim S((W_r)^{d+1})$ coincide.
Thus we can talk about the degree $\deg(M)\in\Z$.
Here we are not interested in the actual sign, hence we do not need
to fix orientations.
Since $K'$ is a free $\Z_r$-space and $S^{N-1}$ is $(N-2)$-connected,
the degree $\deg(M)$ is uniquely determined modulo $r$:
This is because $M$ is unique up to $\Z_r$-homotopy on the codimension 
one skeleton of $K'$, and changing $M$ on top-dimensional cells of $K'$ 
has to be done $\Z_r$-equivariantly, hence it affects $\deg(M)$ by 
a multiple of $r$.

To determine $\deg(M)\mod r$, we let $f$ be the affine map 
that takes the vertices in $C_0$ to $-\one=-(e_1+\dots+e_d)$ 
and the vertices in $C_i\ (1\le i\le d)$ to $e_i$, where $e_i$ 
is the $i$th standard basis vector of~$\R^d$.
The singleton $C_{d+1}$ does not matter, 
we can choose it arbitrarily in $\R^d$.
Let $P\in S(W_r^{d+1})$ be the normalization of the point 
$(p(1,\dots,1,0),0,\dots,0)\in W_r^{d+1}$.
The pre-image $M^{-1}(P)$ is exactly the set of barycenters of 
the $(r-1)!^{d+1}$ top-dimensional faces of $K'\cap \Delta_{r-1,r-1}$.
With $\Delta_{r-1,r-1}$ we mean the full subcomplex 
$[r-1]^{*(r-1)}_{\Delta(2)}$ of $\Delta_{r,r-1}$.
One checks that all pre-images of $P$ have the same pre-image orientation.
This was essentially done in \cite{BMZ09}
when we calculated that $c_f(\Phi)=(r-1)!^d\zeta$.
Hence
\begin{equation}
\label{eqDegreeOfM}
\deg(M)=\pm(r-1)!^{d+1}=\pm 1\mod r.
\end{equation}

Alternatively one can take any map $m:\Delta_{r,r-1}\toG{\Z_r} S(W_r)$, 
show that its degree is $\pm 1$ by a
similar pre-image argument in dimension $d=1$, and deduce that 
\[
\deg(M)=\deg(m^{*(d+1)})=\deg(m)^{d+1}=\pm 1\mod r.
\]

\begin{proof}[First proof of Theorem \ref{thmNewColTopTverThm}]

Since $\deg(M)\neq 0$, $M$ is not null-homotopic.
Thus $M$ does not extend to a map with domain $K'*[1]\subseteq K$.
Therefore the test-map $F$ of \eqref{eqTestmapF} does not exist.
\end{proof}

\begin{remark}
The degree $\deg(M)$ is even uniquely determined modulo 
$r!$. To see this one uses the $\Sym_r$-equivariance of $M$ and the 
fact that $M$ is given uniquely up to $\Sym_r$-homotopy on the non-free part, 
which lies in the codimension one skeleton of $K'$. The latter can be shown 
with the modified test-map $F_0$ from \cite{BMZ09}. This might be an ansatz 
for the affine version of Theorem~\ref{thmNewColTopTverThm} for non-primes~$r$.
\end{remark}

\subsubsection*{Index computations}

Let in the following $H^*$ denote \cech\ cohomology with $\Z_r$-coefficients, where $r$ is prime.
The equivariant cohomology of a $G$-space $X$ is defined as
\[
H^*_G(X)\ :=\ H^*(EG\times_G X),
\]
where $EG$ is a contractible free $G$-CW complex and $EG\times_G X:=(EG\times X)/G$.
The classifying space of $G$ is $BG:= EG/G$.
If $p:X\to B$ is furthermore a projection to a trivial $G$-space $B$, 
we denote the \emph{cohomological index} of $X$ over $B$, also called the 
\emph{Fadell--Husseini index} \cite{FH88}, by
\[
\ind_G^B(X)\ :=\ \ker\big(H^*_G(B)\tof{p^*} H^*_G(X)\big)
           \ \subseteq\ H^*_G(B)\iso H^*(BG)\otimes H^*(B).
\]
If $B=\pt$ is a point then one also writes 
$H_G^*(\pt)=H^*(G)$ and $\ind_G(X):=\ind_G^\pt(X)$.
\medskip

\noindent
The cohomological index has the four properties
\begin{myitemize}
\item Monotonicity: If there is a bundle map $X\toG{G}Y$ then
\[
\ind_G^B(X)\supseteq\ind_G^B(Y).
\]
\item Additivity: If $(X_1\cup X_2,X_1,X_2)$ is excisive, then
\[
\ind_G^B(X_1)\cdot\ind_G^B(X_2)\subseteq\ind_G^B(X_1\cup X_2).
\] 
\item 
Joins: 
\[
\ind_G^B(X)\cdot\ind_G^B(Y)\subseteq\ind_G^B(X*Y).
\]
\item Subbundles: From the continuity of \cech\ cohomology $H^*$ it follows that 
if there is a is a 
bundle map $f:X\toG{G}Y$ and a closed subbundle $Z\subseteq Y$ then
\begin{equation}
\label{eqIndexThm}
\ind_G^B(f^{-1}(Z))\cdot\ind_G^B(Y)\subseteq \ind_G^B(X).
\end{equation}
\end{myitemize}

\noindent
The first two properties imply the other two. For more information about this index theory see \cite{FH87} and \cite{FH88}.

If $r$ is odd then the cohomology of~$\Z_r$ as a $\Z_r$-algebra is
\[
H^*(\Z_r)=H^*(B\Z_r)\iso \Z_r[x,y]/ {(y^2)},
\]
where $\deg(x)=2$ and $\deg(y)=1$.
If $r$ is even, then $r=2$ and $H^*(\Z_r)\iso \Z_2[t]$, $\deg t=1$.

\begin{theorem}
Let $K$ be an $n$-dimensional connected free $\Z_r$-CW complex and 
let $S$ be an $n$-dimensional $(n-1)$-connected free $\Z_r$-CW complex.
If there is a $\Z_r$-map $M:K\toG{\Z_r}S$ that induces an isomorphism on $H^n$, then
\[
\ind_{\Z_r}^\pt(K) = H^{*\ge n+1}(B\Z_r).
\]
\end{theorem}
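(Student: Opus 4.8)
The plan is to bound $\ind_{\Z_r}^\pt(K)$ from both sides and show the two bounds coincide with $H^{*\ge n+1}(B\Z_r)$. For the inclusion $\ind_{\Z_r}^\pt(K)\supseteq H^{*\ge n+1}(B\Z_r)$, I would use the fact that $K$ is an $n$-dimensional free $\Z_r$-CW complex. Since $K$ is free, the bundle $EK\times_{\Z_r}K\to B\Z_r$ has fiber $K$ of dimension $n$; but more directly, a free $n$-dimensional $\Z_r$-space admits a $\Z_r$-map to $E\Z_r^{(n)}$, the $n$-skeleton of a model of $E\Z_r$ (by equivariant obstruction theory / the classifying map of the free action, cellularly approximated), and hence $H^m_{\Z_r}(K)$ is a quotient of $H^m(B\Z_r)$-modules that vanishes... more carefully: the projection $K\to\pt$ together with freeness gives that $H^*_{\Z_r}(K)=H^*(K/\Z_r)$, which vanishes above dimension $n$, so every class in $H^{m}(B\Z_r)$ for $m\ge n+1$ pulls back to zero in $H^m_{\Z_r}(K)$. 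That is exactly the statement $H^{*\ge n+1}(B\Z_r)\subseteq\ind_{\Z_r}^\pt(K)$.

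For the reverse inclusion $\ind_{\Z_r}^\pt(K)\subseteq H^{*\ge n+1}(B\Z_r)$, i.e.\ no nonzero class in degrees $\le n$ lies in the index, I would exploit the $\Z_r$-map $M:K\to S$ and the Monotonicity property: $\ind_{\Z_r}^\pt(K)\subseteq\ind_{\Z_r}^\pt(S)$ would be the wrong direction, so instead I use that a bundle map $K\to S$ gives $\ind_{\Z_r}^\pt(K)\supseteq\ind_{\Z_r}^\pt(S)$ — again wrong. The right tool is the Subbundles property \eqref{eqIndexThm} or a direct cohomological argument: apply $H^*_{\Z_r}(-)$ to $M:K\to S$ to get $M^*:H^*_{\Z_r}(S)\to H^*_{\Z_r}(K)$. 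Since $S$ is $(n-1)$-connected and free, the Serre spectral sequence of $S\to E\Z_r\times_{\Z_r}S\to B\Z_r$ shows $H^m_{\Z_r}(S)\cong H^m(B\Z_r)$ for $m\le n-1$ and there is an exact sequence relating $H^n_{\Z_r}(S)$ and $H^n(B\Z_r)$ with the fiber term $H^n(S)$. Using that $M$ induces an isomorphism on $H^n$ of the fibers, a spectral sequence comparison shows $M^*$ followed by the structure map identifies the kernel of $H^*(B\Z_r)\to H^*_{\Z_r}(K)$ with the kernel of $H^*(B\Z_r)\to H^*_{\Z_r}(S)$ in degrees $\le n$, and the latter kernel is zero in degrees $\le n$ because $S$ is $n$-dimensional and $(n-1)$-connected — its index is $H^{*\ge n+1}(B\Z_r)$ by the standard computation for representation-sphere-like spaces (cf.\ the $k=0$ degree argument preceding the theorem). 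Concretely: $\ind_{\Z_r}^\pt(S)=H^{*\ge n+1}(B\Z_r)$ because the bottom nonzero class of $H^*(S)$ in degree $n$ transgresses (or rather, the first potentially nonzero differential in the spectral sequence) and the generator of $H^{n+1}(B\Z_r)$ is hit, while everything below survives.

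The crux — and the step I expect to be the main obstacle — is showing that the hypothesis \emph{$M$ induces an isomorphism on $H^n$} forces $\ind_{\Z_r}^\pt(K)$ to be as small as $\ind_{\Z_r}^\pt(S)$ in the borderline degree $n$ (and $n+1$), since in general monotonicity only gives $\ind_{\Z_r}^\pt(K)\supseteq\ind_{\Z_r}^\pt(S)$ for a $\Z_r$-map in the \emph{opposite} direction. I would handle this by a direct spectral sequence comparison: the map $M$ induces a morphism of Serre spectral sequences for the Borel constructions $E\Z_r\times_{\Z_r}K\to B\Z_r$ and $E\Z_r\times_{\Z_r}S\to B\Z_r$; on the $E_2$-page the map of fibers is the identity on $H^q$ for $q<n$ (both are trivial there since $K/\Z_r$... no — $K$ need not be $(n-1)$-connected; the fiber cohomology of $K$ can be complicated in low degrees). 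Therefore I would instead argue via the \textbf{cofiber} of $M$ or via the long exact sequence, or most cleanly: take any class $\alpha\in H^m(B\Z_r)$, $m\le n$, with $p_K^*(\alpha)=0$ in $H^m_{\Z_r}(K)$; I must derive a contradiction with $m\le n$. Using that $K$ is $n$-dimensional and free, $H^*_{\Z_r}(K)=H^*(K/\Z_r)$ is concentrated in degrees $\le n$, and the edge homomorphism $H^m(B\Z_r)\to H^m_{\Z_r}(K)$ is part of the five-term/edge exact sequence; its kernel in low degrees is controlled by whether the fundamental class of $K$ (which exists because $K$ carries a nonzero class in $H^n$ mapping isomorphically to that of $S$, hence $H^n(K/\Z_r)$ surjects onto $\Z_r$ appropriately) pairs nontrivially with the pulled-back powers of $x$ (or $t$). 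Since $\deg(M)$-type reasoning as in \eqref{eqDegreeOfM} shows the top class is detected, the powers $t^m$ (resp.\ $x^iy^\epsilon$) remain nonzero in $H^*_{\Z_r}(K)$ for all $m\le n$, giving $\ind_{\Z_r}^\pt(K)\cap H^{\le n}(B\Z_r)=0$. Combining the two inclusions yields $\ind_{\Z_r}^\pt(K)=H^{*\ge n+1}(B\Z_r)$ as claimed.
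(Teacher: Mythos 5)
Your first inclusion, $H^{*\ge n+1}(B\Z_r)\subseteq\ind_{\Z_r}^\pt(K)$ via $H^*_{\Z_r}(K)\iso H^*(K/\Z_r)=0$ in degrees $\ge n+1$, is correct and is exactly what the paper uses. The reverse inclusion, however, has a genuine gap. You correctly diagnose the two obstacles (monotonicity points the wrong way for a map $K\to S$, and the rows $1,\dots,n-1$ of the Borel spectral sequence of $K$ are uncontrolled since $K$ is only assumed connected), but your resolution ends in an assertion that is precisely the statement to be proved: ``the powers $t^m$ (resp.\ $x^iy^\epsilon$) remain nonzero in $H^*_{\Z_r}(K)$ for all $m\le n$.'' Neither the five-term exact sequence (which only reaches degree $2$) nor ``the top class is detected'' yields this; detecting a class in $H^n(K)^{\Z_r}$ does not by itself prevent a differential $d_i$, $i\le n$, emanating from one of the unknown intermediate rows from killing some $x^ay^b$ of degree $\le n$ in the bottom row.

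The missing mechanism, which is how the paper closes exactly this gap, is a two-step use of the morphism $M^*\colon E_*^{*,*}(S)\to E_*^{*,*}(K)$ of Leray--Serre spectral sequences together with the $H^*(B\Z_r)$-module structure of the differentials. First, since $M$ induces an isomorphism on $H^n$, the $n$-th rows of the two spectral sequences are identified, so the $n$-th row of $E_*^{*,*}(K)$ carries no differential before page $n+1$ (its differentials are $M^*$ of those of $S$, which vanish for $i\le n$ by $(n-1)$-connectedness of $S$); since $E_\infty^{p,q}(K)=0$ for $p+q\ge n+1$, the classes $E_{n+1}^{*\ge 1,n}(K)$ must die via $d_{n+1}$ into the bottom row, and they kill exactly $H^{*\ge n+2}(B\Z_r)$ there. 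Second, if some earlier differential $d_i$, $i\le n$, hit the bottom row in a degree $\le n$, say $d_i(\alpha)=x^ay^b\neq0$, then $d_i(x^k\alpha)=x^{a+k}y^b$ would annihilate the bottom row in all higher degrees as well, leaving nothing for the (surviving, nonzero) $n$-th row to map onto at page $n+1$ --- a contradiction. Hence the bottom row survives intact through degree $n$, which is the injectivity you need. Without this module-structure argument (or an equivalent substitute), your proof does not go through.
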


\begin{proof}
The two fiber bundles $E\Z_r\incl E\Z_r\times_{\Z_r} K\to B\Z_r$ and 
$E\Z_r\incl E\Z_r\times_{\Z_r} S\to B\Z_r$ induce two Leray--Serre spectral sequences 
$E_*^{*,*}(K)$ and $E_*^{*,*}(S)$, and $M$ induces a morphism $M^*$ between them;
see Figure \ref{figSSofFHindexThm}.

\begin{figure}[tbh]
\centering
\input{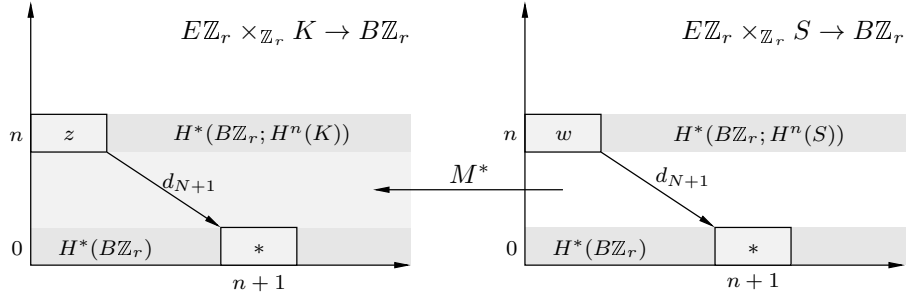}
\caption{The morphism $M^*$ between the spectral sequences $E_*^{*,*}(S)$ and $E_*^{*,*}(K)$.}
\label{figSSofFHindexThm}
\end{figure}

The two $n$th rows $H^*(\Z_r;H^n(K))$ and $H^*(\Z_r;H^n(S))$ 
at the $E_2$-pages are identified by $M^*$.
At the $E_\infty$-pages both spectral sequences have to satisfy $E_\infty^{p,q}=0$ 
whenever $p+q\ge n+1$.
This is because $K$ is free, hence $H^*_{\Z_r}(K)\iso H^*(K/{\Z_r})$, 
which is zero in degrees $*\ge n+1$.
The analog holds for $S$.
Therefore, in $E_*^{*,*}(S)$ the elements in the 
bottom row $H^{*\ge n+1}(\Z_r)$ must be hit by some differential.
These differentials can come only from the $n$th row at 
the $E_{n+1}$-page (this argument even gives us the $H^*(\Z_r)$-module 
structure of the $n$th row).
Hence there is a non-zero transgressive element $w\in H^0(\Z_r;H^n(S))=H^n(S)^{\Z_r}=E^{0,n}_{2}(S)$, 
that is, $d_{n+1}(w)\neq 0$.
Let $z:=M^*(w)\in H^n(K)^{\Z_r}=E^{0,n}_2(K)$.
Then $d_i(z)=d_i(M^*(w))=M^*(d_i(w))$.
Therefore $z$ survives at least until $E_{n+1}$.
Analogously, the whole $n$th row survives until $E_{n+1}$.
We know that all elements in $E_{n+1}^{*\ge 1,n}(K)$ have 
to die eventually, so they do it exactly on page $E_{n+1}$.
Thus these are exactly the elements that make the part $H^{*\ge n+2}$ 
of the bottom row to zero.

Hence no non-zero differential can arrive at the bottom row on 
an earlier page of $E_*^{*,*}(K)$.
This is because any element $\alpha$ with $d_i(\alpha)=x^ay^b$ 
would imply $d_i(x^k\alpha)=x^{a+k}y^b$ for any $k>0$.
Thus the whole bottom row $H^*(\Z_r)$ of $E_*^{*,*}(K)$ survives until $E_{n+1}$.
At the $E_{n+1}$-page we know exactly the differentials, 
since we know them for the other spectral sequence $E_*^{*,*}(S)$.

Therefore at $E_\infty(K)$, the bottom row is $H^{*\le n}(\Z_r)$. 
Everything else in $E_\infty^{*,0}$ has become zero. Since the index defining map 
$H^*_{\Z_r}(\pt)\to H^*_{\Z_r}(K)$ is the bottom edge homomorphism, we get
\[
\ind_{\Z_r}^\pt(K) = H^{*\ge n+1}(B\Z_r).\vspace{-6mm}
\]
\end{proof}

We apply this theorem to the above maps 
$M:K'\to S(W_r^{d+1})$ and $(M*\id):K'*[r]\to S(W_r^{d+1})*[r]$. 

\begin{corollary}
\label{corIndexOfK}
The $\Z_r$-index of $K'=(\Delta_{r,r-1})^{*(d+1)}$ is
\[
\ind_{\Z_r}^\pt(K')= H^{*\ge N}(B\Z_r)
\]
and the $\Z_r$-index of $K'*[r]$ is
\[
\ind_{\Z_r}^\pt(K'*[r])= H^{*\ge N+1}(B\Z_r).
\]
\end{corollary}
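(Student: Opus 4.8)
The plan is to check, for each of the two pairs $(K',\,S(W_r^{d+1}))$ and $(K'*[r],\,S(W_r^{d+1})*[r])$, that the hypotheses of the theorem above are met, and then to read off the two index equalities from its conclusion.

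\emph{First equality.} As recalled in the discussion preceding the theorem, $\Delta_{r,r-1}$ is a connected orientable pseudomanifold of dimension $r-2$, so $K'=(\Delta_{r,r-1})^{*(d+1)}$ is a connected orientable pseudomanifold of dimension $(d+1)(r-2)+d=N-1$, and it is a free $\Z_r$-CW complex. The representation sphere $S(W_r^{d+1})\homeo S^{N-1}$ is an $(N-2)$-connected free $\Z_r$-CW complex of the same dimension $N-1$. It remains to see that the test-map $M\colon K'\to S(W_r^{d+1})$ of \eqref{eqTestmapF} induces an isomorphism on top cohomology: by \eqref{eqDegreeOfM} one has $\deg(M)=\pm(r-1)!^{d+1}\equiv\pm1\pmod r$, so, using that $K'$ is a connected orientable pseudomanifold, $M^*$ acts on $H^{N-1}(\,\cdot\,;\Z_r)\iso\Z_r$ as multiplication by a unit and is therefore an isomorphism. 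Applying the theorem above with $n=N-1$ yields $\ind_{\Z_r}^\pt(K')=H^{*\ge N}(B\Z_r)$.

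\emph{Second equality.} Write $[r]$ for the free $\Z_r$-set of size $r$. Then $K'*[r]$ is a connected free $\Z_r$-CW complex of dimension $N$, and $S(W_r^{d+1})*[r]$ is a free $\Z_r$-CW complex of dimension $N$ which is $(N-1)$-connected, since the standard connectivity estimate for joins gives $\conn\!\big(S(W_r^{d+1})*[r]\big)\ge\conn\!\big(S(W_r^{d+1})\big)+\conn([r])+2=(N-2)+(-1)+2=N-1$ (concretely $S(W_r^{d+1})*[r]\homeq\bigvee_{r-1}S^N$). Finally, $M*\id$ induces an isomorphism on $H^N(\,\cdot\,;\Z_r)$: since $\Z_r$ is a field, the Künneth formula for joins provides a natural isomorphism $\widetilde H^N(X*[r];\Z_r)\iso\widetilde H^{N-1}(X;\Z_r)\otimes_{\Z_r}\widetilde H^0([r];\Z_r)$ (equivalently, decompose $X*[r]$ into the $r$ cones over $X$ glued along their common base and use Mayer--Vietoris), and $M^*$ is an isomorphism in degree $N-1$ while $\id^*$ is one on $\widetilde H^0([r])$. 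Applying the theorem above with $n=N$ gives $\ind_{\Z_r}^\pt(K'*[r])=H^{*\ge N+1}(B\Z_r)$.

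\emph{Expected obstacle.} The only delicate point is that $M*\id$ induces an isomorphism in top degree: for $r>2$ neither $K'*[r]$ nor $S(W_r^{d+1})*[r]$ is a pseudomanifold, so one cannot simply speak of a degree, and the claim must be reduced, via the join Künneth isomorphism, to the already-established behaviour of $M$ in degree $N-1$. As a consistency check, the inclusion $\supseteq$ in the second equality is automatic anyway---from the join property of the index together with $\ind_{\Z_r}^\pt([r])=H^{*\ge1}(B\Z_r)$, or simply from freeness of $K'*[r]$ and $\dim(K'*[r])=N$---so it is really only the inclusion $\subseteq$ that requires the theorem above.
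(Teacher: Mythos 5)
Your proof is correct and follows exactly the paper's route: the paper's entire argument is the one-line remark that one applies the preceding spectral-sequence theorem to $M:K'\to S(W_r^{d+1})$ and to $M*\id:K'*[r]\to S(W_r^{d+1})*[r]$, and you carry out precisely that, verifying the hypotheses (dimensions, connectivity, freeness, and the top-degree isomorphism via $\deg(M)\equiv\pm1\pmod r$ and the join K\"unneth formula). The only difference is that you spell out the check for $M*\id$ in degree $N$, which the paper leaves implicit.
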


Using this corollary we can compute the index of more 
general joins of chessboard complexes.

\begin{corollary}
\label{corIndexOfDomainOfGeneralTestmap}
Let $0\le c_0,\dots,c_m\le r-1$ and let $s:=\sum c_i$. Let $K:=\Delta_{r,c_0}*\dots*\Delta_{r,c_m}$. Then
\[
\ind_{\Z_r}^\pt(K) = H^{*\ge s}(B\Z_r).
\]
\end{corollary}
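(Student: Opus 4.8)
The plan is to reduce the general statement to Corollary~\ref{corIndexOfK} by an upper bound from monotonicity and a lower bound from the join property, so that the two bounds meet.

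First I would prove the upper bound $\ind_{\Z_r}^\pt(K)\subseteq H^{*\ge s}(B\Z_r)$. The idea is to find a $\Z_r$-map from $K$ into a space whose index we already know to be exactly $H^{*\ge s}(B\Z_r)$. Since each $c_i\le r-1$, the chessboard complex $\Delta_{r,c_i}=[r]^{*c_i}_{\Delta(2)}$ admits a natural $\Z_r$-inclusion into $[r]^{*c_i}=(\Delta_{r,1})^{*c_i}$ (forgetting the disjointness condition is not quite what we want; rather one uses that $[r]^{*c_i}_{\Delta(2)}\subseteq[r-1]^{*c_i}$ is false, so instead I would argue directly). Concretely, reorganize: $\sum c_i = s$ with $c_i\le r-1$, so we can write $K$ as a join where the total ``number of $[r]$-factors'' is $s$; comparing with $K'=(\Delta_{r,r-1})^{*(d+1)}$ for suitable $d$ and grouping the $c_i$ into blocks of size $r-1$ (padding is not allowed since $\Delta_{r,c}$ for larger $c$ has \emph{smaller} index, going the wrong way). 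So the clean route for the upper bound is the join property applied to the spheres: there is a $\Z_r$-map $\Delta_{r,c_i}\to S(W_r)^{*(c_i-1)}$? No — better: use that $\Delta_{r,c}$ is $(c-2)$-connected and of dimension $c-1$, hence there is a $\Z_r$-map $\Delta_{r,c}\to S(W_r^{\oplus(c-1)})$ only when... Let me instead use the map $m:\Delta_{r,r-1}\to S(W_r)$ from the text. For general $c\le r-1$, $\Delta_{r,c}$ $\Z_r$-maps to $\Delta_{r,r-1}$ (as a subcomplex, via any injection $[c]\incl[r-1]$ on the second factor), which $\Z_r$-maps to $S(W_r)$; joining these gives a $\Z_r$-map $K\to S(W_r)^{*s}=S(W_r^{\oplus s})$. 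Monotonicity then yields $\ind_{\Z_r}^\pt(K)\subseteq\ind_{\Z_r}^\pt(S(W_r^{\oplus s}))=H^{*\ge s}(B\Z_r)$, where the last equality is the standard computation of the index of a representation sphere (which also follows from Corollary~\ref{corIndexOfK} with $d=s-1$ and all color classes singletons, i.e. $K'=(\Delta_{r,1})^{*s}\simeq$ the sphere... actually $K'=(\Delta_{r,r-1})^{*(d+1)}$, so one takes instead the corollary's map $m^{*s}$).

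Second, for the lower bound $\ind_{\Z_r}^\pt(K)\supseteq H^{*\ge s}(B\Z_r)$, I would use the Joins property of the index together with a computation of $\ind_{\Z_r}^\pt(\Delta_{r,c})$ for a single $c\le r-1$. The key sub-claim is $\ind_{\Z_r}^\pt(\Delta_{r,c})\supseteq H^{*\ge c}(B\Z_r)$; granting this, the Joins property gives $\ind_{\Z_r}^\pt(K)\supseteq\prod_i H^{*\ge c_i}(B\Z_r)=H^{*\ge s}(B\Z_r)$ since the ideal $H^{*\ge a}(B\Z_r)$ times $H^{*\ge b}(B\Z_r)$ equals $H^{*\ge a+b}(B\Z_r)$ in the cohomology of $\Z_r$ (this uses that $x$ is not a zero divisor, and a short check for the $y$-factor when $r$ is odd). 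For the sub-claim on $\Delta_{r,c}$ itself, I would invoke the theorem proved just above: $\Delta_{r,c}$ is a $(c-1)$-dimensional $(c-2)$-connected free $\Z_r$-complex, and there is a $\Z_r$-map $\Delta_{r,c}\to S(W_r^{\oplus(c-1)})$ inducing an isomorphism on $H^{c-1}$ — this last fact is exactly the degree-$\pm1$ computation sketched in the text (the pre-image argument counting top faces of $\Delta_{c,c}\subseteq\Delta_{r,c}$, or equivalently $m^{*(c-1)}$ using $m:\Delta_{r,c}\to S(W_r)$ of degree $\pm1\bmod r$). Hence the theorem gives $\ind_{\Z_r}^\pt(\Delta_{r,c})=H^{*\ge c}(B\Z_r)$, which is even an equality.

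Combining the two inclusions gives $\ind_{\Z_r}^\pt(K)=H^{*\ge s}(B\Z_r)$, as desired. The main obstacle I anticipate is the degree/isomorphism-on-top-cohomology claim for $\Delta_{r,c}$ with $c<r-1$: one must check that $\Delta_{r,c}$ is an orientable connected pseudomanifold (standard for chessboard complexes $\Delta_{m,n}$ with $n\le m$, e.g. $n\le m-1$ makes it a pseudomanifold of dimension $n-1$ when... — one should double-check the precise connectivity/pseudomanifold hypotheses, since $\Delta_{r,r-1}$ is the borderline case used in the text) and that the map $m$ restricted appropriately still has degree coprime to $r$; alternatively, one sidesteps this by only ever using $m:\Delta_{r,r-1}\to S(W_r)$ and handling $c<r-1$ purely through monotonicity for the upper bound and through a direct spectral-sequence or connectivity argument for the lower bound. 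Everything else is bookkeeping with the ideal structure of $H^*(B\Z_r)$ and the four formal properties of the index.
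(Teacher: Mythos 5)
There are genuine gaps in both containments. The hard direction is $\ind_{\Z_r}^\pt(K)\subseteq H^{*\ge s}(B\Z_r)$, i.e.\ that the index contains nothing below degree~$s$, and your argument for it cannot work: monotonicity runs the other way (a $\Z_r$-map $K\toG{\Z_r}Y$ yields $\ind_{\Z_r}^\pt(K)\supseteq\ind_{\Z_r}^\pt(Y)$, so mapping $K$ \emph{into} a sphere can only ever produce the containment $\supseteq$); the join of the $m+1$ maps $\Delta_{r,c_i}\to S(W_r)$ lands in $S(W_r)^{*(m+1)}$, not in $S(W_r)^{*s}$; and in any case $\ind_{\Z_r}^\pt(S(W_r^{\oplus s}))=H^{*\ge s(r-1)}(B\Z_r)$, not $H^{*\ge s}(B\Z_r)$. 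The direction $\supseteq$ is the easy one, but not by your route: it follows at once from freeness of $K$ and $\dim K=s-1$ (so that $H^*_{\Z_r}(K)\iso H^*(K/\Z_r)$ vanishes above degree $s-1$), whereas your appeal to the Joins property fails because $H^{*\ge a}(B\Z_r)\cdot H^{*\ge b}(B\Z_r)\neq H^{*\ge a+b}(B\Z_r)$ in $\Z_r[x,y]/(y^2)$ when $a$ and $b$ are both odd: every element of degree $a$ and every element of degree $b$ is then a multiple of $y$, so the product ideal misses degree $a+b$ entirely (for $c_0=c_1=1$ one gets only $H^{*\ge 3}$, not $H^{*\ge 2}$). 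The proof you sketch for the sub-claim is also broken for $c<r-1$: $S(W_r^{\oplus(c-1)})$ has dimension $(c-1)(r-1)-1\neq c-1$; $\Delta_{r,c}$ is not $(c-2)$-connected in general (its connectivity is governed by $\lfloor(r+c+1)/3\rfloor$, e.g.\ $\Delta_{5,4}$ is $3$-dimensional but only simply connected); and $\Delta_{r,c}$ is not a pseudomanifold for $c<r-1$, since each ridge lies in $r-c+1>2$ facets.

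The idea you are missing is the complementation trick. Put $L:=\Delta_{r,r-1-c_0}*\dots*\Delta_{r,r-1-c_m}$ and $K':=(\Delta_{r,r-1})^{*(m+1)}$. Splitting the $r-1$ columns of each board into two groups gives a $\Z_r$-inclusion $K'\incl K*L$, hence $\ind_{\Z_r}^\pt(K)\cdot\ind_{\Z_r}^\pt(L)\subseteq\ind_{\Z_r}^\pt(K*L)\subseteq\ind_{\Z_r}^\pt(K')$, and Corollary~\ref{corIndexOfK} gives $\ind_{\Z_r}^\pt(K')=H^{*\ge(r-1)(m+1)}(B\Z_r)$, which for odd $r$ is the principal ideal generated by an \emph{even} power of $x$. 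Since the freeness/dimension bounds give $\ind_{\Z_r}^\pt(K)\supseteq H^{*\ge s}$ and $\ind_{\Z_r}^\pt(L)\supseteq H^{*\ge(r-1)(m+1)-s}$, and the two codimensions add up exactly, any hypothetical element of $\ind_{\Z_r}^\pt(K)$ of degree below $s$ could be multiplied by a suitable element of $\ind_{\Z_r}^\pt(L)$ (one of the two factors can always be taken to be a pure power of $x$, avoiding $y^2=0$) to produce a nonzero element of $\ind_{\Z_r}^\pt(K')$ of degree below $(r-1)(m+1)$ --- a contradiction. This yields the hard containment without any degree, pseudomanifold, or connectivity statement for $\Delta_{r,c}$ with $c<r-1$, which is precisely what your fallback option at the end would still require.
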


\begin{proof} 
Let $L:=\Delta_{r,r-1-c_0}*\dots*\Delta_{r,r-1-c_m}$ and $K':=(\Delta_{r,r-1})^{*(m+1)}$.
Then $\dim K=s-1$  and $\dim K'=(r-1)(m+1)-1$.
We calculate $\dim K'+1=(\dim K +1)+(\dim L +1)$.
There is an inclusion $K'\toG{\Z_r} K*L$.
This implies
\begin{equation}
\label{eqIndABC}
\ind_{\Z_r}(K') \supseteq \ind_{\Z_r}(K*L) \supseteq \ind_{\Z_r}(K)\cdot\ind_{\Z_r}(L) .
\end{equation}
Since $K$ is free, $H^*_{\Z_r}(K)=H^*(K/\Z_r)$, hence
\begin{equation}
\label{eqIndB}
\ind_{\Z_r}(K)\supseteq H^{*\ge \dim K+1}(B\Z_r),
\end{equation}
and analogously
\begin{equation}
\label{eqIndC}
\ind_{\Z_r}(L)\supseteq H^{*\ge \dim L+1}(B\Z_r).
\end{equation}

The dimension $a := \dim K'$ is odd if $r$ is odd. Using Corollary
\ref{corIndexOfK}, we find $\ind_{\Z_r}(K') = H^{\geq a+1}(B\Z_r) = \langle
x^{\frac{a+1}{2}} \rangle$ if $r$ is odd, and $\ind_{\Z_r}(K') =
\langle t^{a+1} \rangle$ if $r=2$.
Together with equation \eqref{eqIndABC}, 
the inclusions \eqref{eqIndB} and \eqref{eqIndC} have to hold with equality.
\end{proof}

It is interesting to note that the last argument of the proof would fail for odd $r$ if $a+1$ were odd, due to the relation $y^2=0$ in $H^*(\Z_r)$.

Now we plug in the configuration space $K$ from \eqref{eqDefOfTestSpaceK} 
and obtain the second proof of Theorem~\ref{thmNewColTopTverThm}.

\medskip

\begin{proof}[Second proof of Theorem \ref{thmNewColTopTverThm}]
By the monotonicity of the index, a test-map $F:K\toG{\Z_r} S(W_r^{d+1})$ 
according to \eqref{eqTestmapF} would imply that
\[
\ind_{\Z_r}^\pt(K)\ \supseteq\ \ind_{\Z_r}^\pt(S(W_r^{d+1})).
\]
This is a contradiction since $\ind_{\Z_r}^\pt(K)= H^{*\ge N+1}(B\Z_r)$ and
$\ind_{\Z_r}^\pt(S(W_r^{d+1}))=H^{*\ge N}(B\Z_r)$, 
as $S(W_r^{d+1})$ is an $(N-1)$-dimensional free $\Z_r$-sphere.
\end{proof}

On the surface this proof seems to be a more difficult reformulation of the first proof.
However, its view point is essential for the transversal generalization, since 
we do not rely on the geometric tools of the
Reduction Lemma \ref{lemReductionToThm21} anymore,
and such a reduction lemma does not seem to exist for the 
Tverberg--\vrecica\ type transversal theorem.
Thus we use the more general configuration space \eqref{eqDefOfTestSpaceK} instead.

\section{A new Borsuk--Ulam type theorem} \label{secNewBUThm}

In this section we prove the second topological main step towards 
the proof of Theorem \ref{thmColoredTV}.
This is the following Borsuk--Ulam type theorem. It will be applied
in combination with the subsequent intersection lemma \ref{lemIntersectionLemma}.

\begin{theorem}[Borsuk--Ulam type]
\label{lemParametrizedBU}
Let
\begin{compactitem}[\ \ $\bullet$]
\item $G=(\Z_p)^m$ be an elementary abelian group $($in particular, $p$ is a prime$)$,
\item $K$ a $G$-CW-complex with index $\ind_G^\pt(K)\subseteq H^{*\ge n+1}(BG;\Z_p)$,
\item $B$ a connected, trivial $G$-space,
\item $E\tof{\phi} B$ a $G$-vector bundle $($all fibers carry the same
 $G$-representation$)$,
\item $\Delta:=E^G\to B$ the fixed-point subbundle of $E\to B$,
\item $C\to B$ its $G$-invariant orthogonal complement subbundle $(E=C\oplus\Delta)$,
\item $F$ be the fiber of the sphere bundle $S(C)\to B$.
\end{compactitem} 
Suppose that 
\begin{compactitem}[\ \ $\bullet$]
\item $n=\rank(C)$,
\item $\pi_1(B)$ acts trivially on $H^*(F;\Z_p)$ $($that is, $C\to B$ is orientable if $p\neq 2)$, and
\item we are given a $G$-bundle map $M$,
\[
\xymatrix{
B\times K\ar[rr]^M \ar[dr]_{pr_1} && E=C\oplus\Delta \ar[dl]^\phi\\
& B &.
}
\]
\end{compactitem} 
Then for $S:=M^{-1}(\Delta)$ and $T:=M(S)=\im(M)\cap \Delta$ 
the maps induced by projection
\[
H^*(B;\Z_p)\ \tof{   pr_1^*}\  H^*_G(S;\Z_p)\textnormal{\qquad and\qquad }
H^*(B;\Z_p)\ \tof{\phi|_T^*}\  H^*(T;\Z_p)
\]
are injective.
\end{theorem}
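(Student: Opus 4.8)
The plan is to run a Borsuk--Ulam/index argument fiberwise over $B$, using the relative Leray--Serre spectral sequence of the Borel construction, and to exploit the hypothesis on $\ind_G^\pt(K)$ together with the rank condition $n=\rank(C)$ to force the relevant edge homomorphism to be injective. First I would reduce the statement to a single cohomological claim. Consider the $G$-bundle map $M\colon B\times K\to E$; composing with radial projection on the $C$-summand (away from $S=M^{-1}(\Delta)$) produces, over the open set $B\times K\smallsetminus S$, a $G$-fiber map into the sphere bundle $S(C)\to B$. The obstruction to extending/compressing is measured by the Euler-type class of $S(C)\to B$ pulled back to the Borel construction. Concretely, I would look at the fibration $F\hookrightarrow EG\times_G S(C)\to B$ (fiber $F=S((\R^n)_C)$ with its free $G$-action), whose fiber is $(n-1)$-connected-up-to-the-$G$-action; since $\pi_1(B)$ acts trivially on $H^*(F;\Z_p)$, the spectral sequence has simple coefficients and the first possibly-nonzero differential is the transgression $d_n\colon H^{0}(B;H^{n-1}(F))\to H^{n}(B;\underline{\Z_p})\otimes(\text{stuff from }BG)$, i.e. the fiberwise Euler class $e\in H^n_G(B; \text{local system})$ — more precisely an element of $H^{*}(BG)\otimes H^{*}(B)$ of total degree related to $n$. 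The key point, exactly as in the unparametrized Fadell--Husseini setup, is that this Euler class is \emph{not} a zero-divisor against the part of $H^*(BG)$ not killed by $\ind_G^\pt(K)$: because $\ind_G^\pt(K)\subseteq H^{*\ge n+1}(BG;\Z_p)$, the class $e$, whose $BG$-leading term has degree $\le n$, survives in $H^*_G(B\times K)=H^*(B)\otimes H^*_G(K)$ as a non-zero-divisor in the appropriate sense, and this is what prevents the compression.

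From here the two injectivity claims follow by standard manipulations. For the map $H^*(B;\Z_p)\to H^*_G(S;\Z_p)$ induced by $pr_1$: the pair $(B\times K,\, S)$ (equivalently $(B\times K,\ B\times K\smallsetminus \mathring{S})$ after a neighborhood retraction, using that $S$ is closed and the $G$-action is nice) has a Gysin/Thom-type long exact sequence in $G$-equivariant cohomology, in which the composite $H^*(B)\xrightarrow{pr_1^*} H^*_G(B\times K)\xrightarrow{\text{restr}} H^*_G(S)$ fits; the obstruction class $e$ above is precisely the image of the Thom class, and since multiplication by $e$ is injective on $H^*(B)$ (pulled into $H^*_G(B\times K)$) — which is the index hypothesis repackaged — the connecting map vanishes on the image of $pr_1^*$, forcing $pr_1^*$ followed by restriction to $S$ to be injective. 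For the map $\phi|_T^*\colon H^*(B;\Z_p)\to H^*(T;\Z_p)$: since $T=M(S)\subseteq\Delta$ and $\Delta\to B$ is the \emph{fixed-point} subbundle, $G$ acts trivially on $\Delta$ and hence on $T$, so $H^*_G(T)=H^*(BG)\otimes H^*(T)$ and the natural map $H^*(T)\to H^*_G(S)$ (induced by $M\colon S\to T$ together with $S\to EG\times_G S$) is compatible with $pr_1^*$; one deduces injectivity of $\phi|_T^*$ from injectivity of the $S$-version by a diagram chase, using that $M$ restricted to $S$ is a $G$-map onto $T$ with $G$ acting trivially on the target, so the $BG$-direction splits off cleanly. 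I would set this up as a commuting square of the four spaces $B$, $T$, $S$, $B\times K$ (with Borel constructions where needed) and transfer injectivity along it.

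The main obstacle I expect is the parametrized bookkeeping of the Euler class: making precise in which bidegree of $H^*(BG)\otimes H^*(B)$ the transgression $d_n(w)$ of the fiber generator lives, and verifying that its $H^*(BG)$-component is exactly the generator whose powers $\ind_G^\pt(K)$ fails to contain in degrees $\le n$. This is where the hypotheses $n=\rank(C)$ and $\ind_G^\pt(K)\subseteq H^{*\ge n+1}(BG)$ must be used in tandem, and where the orientability hypothesis on $C\to B$ (trivial $\pi_1$-action) is essential to keep the coefficient system untwisted so that the spectral-sequence comparison with the model fibration $EG\times_G F\to BG$ (pulled back to $B$) is valid. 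A secondary technical point is justifying the excision/neighborhood-deformation-retract step needed to talk about the cohomology of $B\times K$ relative to $S$ in the $G$-equivariant (Borel) setting; this uses that we work with \v{C}ech cohomology and its continuity, exactly as invoked for the \qm{Subbundles} property of the index earlier in the paper, so I would cite that property rather than redo it. Once the Euler class is pinned down, both injectivity statements are formal consequences of the long exact sequence plus the non-zero-divisor property, so the real content is concentrated in that one computation.
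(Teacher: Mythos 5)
Your plan follows the paper's proof almost step for step: compare the Leray--Serre spectral sequence of $EG\times_G S(C)\to BG\times B$ with that of the fiber model $EG\times_G F\to BG$, locate the transgression $\gamma=\alpha\otimes 1+\sum_i\delta_i\otimes\eps_i$ with $\deg\delta_i\le n-1$, feed it into the subbundle property $\ind_G^B(S)\cdot\ind_G^B(S(C))\subseteq\ind_G^B(B\times K)=\ind_G^\pt(K)\otimes H^*(B)$, and conclude by a leading-term argument that $\ind_G^B(S)$ contains no nonzero element $1\otimes\beta$; the passage from $S$ to $T$ is the same diagram chase via $H^*_G(T)=H^*(BG)\otimes H^*(T)$. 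Your Gysin/Thom phrasing is just another packaging of the subbundle property you say you would cite, so there is no real difference of route there.

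The one substantive point you leave open is the crux of the theorem: why is the $H^n(BG)\otimes H^0(B)$-component $\alpha=d_n(1)$ of your fiberwise Euler class nonzero? You flag \qm{verifying that its $H^*(BG)$-component is exactly the generator} as the main obstacle but supply no argument, and without $\alpha\neq 0$ the class $\gamma$ could vanish and the entire non-zero-divisor argument collapses. This is precisely where the hypotheses that $G$ is elementary abelian and that $F=S(C_b)$ is fixed-point free enter (note: fixed-point free, not free as you write --- for $m>1$ a fixed-point-free $(\Z_p)^m$-action on a sphere need not be free): since $F$ is a fixed-point-free $G$-complex, the edge homomorphism $H^*(BG)\to H^*_G(F)$ fails to be injective (tom Dieck, Prop.~3.14), and since the only possibly nonzero differential in $E_*^{*,*}(F)$ is the $H^*(BG)$-linear transgression $d_n$ out of the $(n-1)$-row, one gets $d_n(1)=\alpha\neq 0$. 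With that supplied, the rest of your outline (untwisted coefficients from the orientability hypothesis, K\"unneth over $\Z_p$ for $\ind_G^B(B\times K)$, and the final diagram chase) is correct and coincides with the paper's argument.
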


This theorem generalizes the lemma in Volovikov \cite{Vol96}, which is the special case
when $B=\pt$ and $K$ is $(n-1)$-$\Z_p$-acyclic, and Theorem 4.2 in 
\zivaljevic\ \cite{Ziv99}, whose proof gives the
special case when $m=1$ and $K$ is $(n-1)$-$\Z_p$-acyclic.
The Borsuk--Ulam theorem is the special case $G=\Z_2$, $B=\pt$, $K=S^n$, $E=\R^n$, 
$E$ and $K$ with antipodal action.

We will use it together with the following intersection lemma.

\begin{lemma}[Intersection lemma]
\label{lemIntersectionLemma}
Let $p$ be a positive integer and $\Delta\tof{pr} B$ be a vector bundle 
over an $\Z_p$-orientable compact manifold $B$, whose mod-$p$ Euler class $e\in H^*(B;\Z_p)$ 
satisfies $e^k\neq 0$.
Let $T_0,\dots,T_k\subseteq \Delta$ be sets such that 
$H^{\dim B}(B;\Z_p)\tof{pr_*} H^{\dim B}(T_i;\Z_p)$ is injective for all ~$i$.
Then $T_0\cap\dots\cap T_k\neq \emptyset$.
\end{lemma}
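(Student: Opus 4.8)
The plan is to run a Mayer–Vietoris / cup-product argument in the cohomology of the total space $\Delta$, using the Thom isomorphism to convert the vanishing of a cohomology class in $B$ into a statement about intersections of the $T_i$ inside $\Delta$. First I would observe that the zero section embeds $B$ as a deformation retract of $\Delta$, so throughout I identify $H^*(\Delta;\Z_p)\cong H^*(B;\Z_p)$ and $pr_*$ with the restriction $H^*(\Delta;\Z_p)\to H^*(T_i;\Z_p)$. The hypothesis then says: the restriction map $H^{\dim B}(\Delta)\to H^{\dim B}(T_i)$ is injective for each $i$. Equivalently, writing $N:=\dim B$, the cohomology class $\omega\in H^N(\Delta)$ dual to a point (the fundamental cohomology class of the $\Z_p$-orientable manifold $B$, transported to $\Delta$) restricts nontrivially to each $T_i$; in particular $\omega$ does not lie in the kernel of restriction to any single $T_i$.

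The core step is to argue by contradiction: suppose $T_0\cap\dots\cap T_k=\emptyset$. Consider the open (or closed, after a standard thickening) cover of $\Delta$, or rather of a neighborhood, by the complements $U_i:=\Delta\setminus T_i$. Since the $T_i$ have empty common intersection, the $U_i$ cover $\Delta$. For each $i$ the class $e\in H^*(B;\Z_p)=H^*(\Delta;\Z_p)$, being the Euler class of $\Delta\to B$, is the restriction of the Thom class under the zero section; the key point is that $e$ maps to zero in $H^*(\Delta\setminus T_i)$ is \emph{not} automatic — instead I would use that the hypothesis on $T_i$ forces the connecting map in the long exact sequence of the pair $(\Delta,\Delta\setminus T_i)$ to carry a suitable class, so that $e$ (or a class with the same top-degree behaviour) comes from $H^*(\Delta,\Delta\setminus T_i)$. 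Concretely: injectivity of $H^N(\Delta)\to H^N(T_i)$ is equivalent, via the long exact sequence of the pair $(\Delta, U_i)$ together with (Alexander) duality identifying $H^*(\Delta,U_i)$ with homology of $T_i$, to the statement that the Thom-type class supported near $T_i$ pulls back to the orientation class; hence the Euler class $e\in H^N(\Delta)$ lifts to a relative class $\tilde e_i\in H^N(\Delta,U_i)$. Then the product $\tilde e_0\smile\tilde e_1\smile\dots\smile\tilde e_k$ lives in $H^{(k+1)N}(\Delta,\ U_0\cup\dots\cup U_k)=H^{(k+1)N}(\Delta,\Delta)=0$, yet its image in $H^{*}(\Delta)$ is $e^{k+1}$ — wait, one must be careful with degrees: the right statement is that $\tilde e_0\smile\dots\smile\tilde e_k$ maps to $e^{k}$ times a top class, and the product of $k+1$ relative classes lands in the cohomology of $(\Delta,\bigcup U_i)=(\Delta,\Delta)$, which vanishes, while its image in $H^*(\Delta)$ equals $e^{k}\smile(\text{top class})\neq 0$ because $e^k\neq 0$ and $B$ is $\Z_p$-orientable. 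This contradiction proves $T_0\cap\dots\cap T_k\neq\emptyset$.

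The main obstacle I anticipate is making the relative lift $\tilde e_i\in H^*(\Delta,\Delta\setminus T_i)$ precise: the sets $T_i$ are arbitrary (not subcomplexes or submanifolds), so I would work with \v{C}ech cohomology — which is available by the standing hypothesis that $H^*$ denotes \v{C}ech cohomology — and exploit its continuity to reduce to closed neighborhoods, or equivalently replace $T_i$ by a small closed neighborhood $N_i$ with $H^*(N_i)\cong H^*(T_i)$ and $\bigcap N_i=\emptyset$. One then needs the compatibility between the Euler class of $\Delta\to B$, the Thom class, and the restriction-to-$T_i$ hypothesis; the cleanest route is: (i) pull the whole picture back to $B$ via the zero section and note $e$ is exactly the self-intersection class of the zero section, (ii) the hypothesis says the zero section, pushed into $T_i$, is ``top-dimensionally essential'', and (iii) Poincaré–Alexander–Lefschetz duality in the $\Z_p$-orientable manifold $B$ turns the injectivity on $H^{N}$ into surjectivity of $H_0(T_i)\to H_0(B)$ in the appropriate relative sense, which is precisely what produces the relative Euler class lift. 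Once the lifts exist, the cup-product bookkeeping and the appeal to $e^k\neq 0$ are routine.
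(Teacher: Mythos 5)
Your overall strategy -- produce relative classes supported near each $T_i$, multiply them, and observe that the product lives in $H^*(\Delta,\Delta)=0$ while being detected by $e^k\neq 0$ -- is the right one, and it is essentially the argument of \zivaljevic\ that the paper cites (the paper does not reprove the lemma; it refers to \cite{Ziv99} and only adds a universal-coefficient remark for non-prime $p$). But the central computation in your sketch fails for degree reasons, and the contradiction you aim for never materializes. You place the lifts $\tilde e_i$ in degree $N=\dim B$ and claim the product maps in $H^*(\Delta)\cong H^*(B)$ to \dqm{$e^k$ times a top class} (or to $e^{k+1}$), which you assert is nonzero. It is automatically zero: if $n=\rank\Delta\ge 1$ and $k\ge 1$, then $e^k\smile\omega$ has degree $kn+N>N=\dim B$, and $e^{k+1}$ has degree $(k+1)n$, which in the intended application exceeds $\dim B$ as well (there $\dim B=k(d-k)$ and $n=d-k$). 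So the vanishing of the image in absolute cohomology is no contradiction, and the hypothesis $e^k\neq0$ is never actually used. Note also that the lifting criterion you invoke is off: a class lifts to $H^*(\Delta,U_i)$ iff it restricts to zero on $U_i=\Delta\setminus T_i$, whereas the hypothesis controls the restriction to $T_i$; these are not equivalent.

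The repair forces you into exactly the homological/compact-supports formulation. The injectivity of $H^{N}(B;\Z_p)\to H^{N}(T_i;\Z_p)$ dualizes (this is where the paper's universal-coefficient remark for non-prime $p$ enters, a point your proposal does not address) to the surjectivity of $H_{N}(V_i;\Z_p)\to H_{N}(B;\Z_p)$ for neighborhoods $V_i$ of $T_i$; i.e., each $V_i$ carries a class $z_i$ mapping to $[B]$. Under Poincar\'e--Lefschetz duality in the $(N+n)$-manifold $\Delta$, $z_i$ corresponds to a Thom-type class of degree $n$ (not $N$) supported in $V_i$, and the $(k{+}1)$-fold product is detected in $H^{(k+1)n}_c(\Delta)\cong H^{kn}(B)$, where it equals $\pm e^k\neq0$ --- equivalently, the intersection product $z_0\bullet\dots\bullet z_k=e^k\cap[B]\in H_{N-kn}(\Delta)$ is nonzero. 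Since this product is supported in $\bigcap V_i$, that intersection is nonempty, and continuity of \cech\ cohomology plus compactness lets you shrink the $V_i$ to the $T_i$. So the degree bookkeeping only closes up when one copy of the fundamental class and $k$ copies of the Euler class appear (giving $e^k$), which is precisely what your absolute-cohomology formulation cannot see.
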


A proof for the case where $p$ is prime can be extracted from \cite{Ziv99}. 
For non-primes $p$ one needs to change \zivaljevic's argument only at the point 
where he needs $H_{\dim B}(V_i;\Z_p)\to H_{\dim B}(B;\Z_p)$ to be surjective for 
a neighborhood $V_i$ of $T_i$. This can be proven for $\Z_p$-coefficients 
by a (non-standard) universal coefficient theorem that 
computes homology from cohomology, using $\Z_p$ as the base ring and the fact that $\Z_p$ is an injective $\Z_p$-module.

Next we prove our parameterized Borsuk--Ulam type theorem. We will use \cech\ cohomology 
with $\Z_p$-coefficients.

\medskip
 
\begin{proof}[Proof of Theorem \ref{lemParametrizedBU}] 
~\\
\textbf{(1.)} Let $C\to B$ be the orthogonal complement bundle of $\Delta$ 
in $E$ with respect to a $G$-invariant metric (such that $C$ becomes a $G$-bundle).
We denoted by $S(C)$ the associated sphere bundle and by $F$ the fiber 
in $S(C)$ over some base point $b\in B$.
We denote by $E_*^{*,*}(F)$ and $E_*^{*,*}(S(C))$ the Leray--Serre spectral sequences 
associated to the fibrations 
\begin{equation}
\label{eqBundleF}
F\incl EG\times_G F\to BG\times b
\end{equation}
and 
\begin{equation}
\label{eqBundleSofC}
F\incl EG\times_G S(C)\to BG\times B,
\end{equation}
respectively, see Figure \ref{figSSforBUthm}.

The $E_2$-page $E_2^{*,*}(F)$ has only two non-zero rows, the $0$-row and the $(n-1)$-row.
The local coefficients in $E^{p,q}_2(F)=H^p(BG,H^q(F))$ 
are given by the $\pi_1(BG)$-module structure on $H^q(F)$.
Since $G=\pi_1(BG)$ is an elementary abelian group and $F$ is a sphere, 
the $H^*(BG)$-module structure on $H^q(F)$ is trivial, for the 
$G$ action on $H^q(F)$ is induced by homeomorphisms $F\to F$, and the 
degree of this homeomorphism has to be $1$ if $p$ is odd.
Therefore
\[
E^{p,q}_2(F)=H^p(BG,H^q(F))=H^p(BG)\otimes H^q(F).
\]
The differentials are $H^*(BG)$-homomorphism, and
\begin{equation*}
E^{*,n-1}_n(F)=E^{*,n-1}_{2}(F)=H^*(BG)\otimes H^{n-1}(F)
\end{equation*}
is a $H^*(BG)$-module generated by
\begin{equation*}
1\in E^{0,n-1}_{n}(F)=H^0(BG)\otimes H^{n-1}(F),
\end{equation*}
where $1$ is regarded as the generator of $H^{0}(BG)$.
Hence there is a non-vanishing differential in $E_*^{*,*}(F)$ 
if and only if the differential $d_n: E^{0,n-1}(F)\to E^{n,0}(F)$ is non-zero.
Since $F$ is fixed-point free, the edge homomorphism $H^*(BG)\to H^*_G(F)$ 
is not injective \cite[Prop.~3.14, p.~196]{Die87},  
thus there must be a non-vanishing differential.
Therefore there is a non-zero element $\alpha=d_{n}(1)\in\ind_G^\pt(F)$ of degree $n$.
\smallskip

\noindent
\textbf{(2.)} Now the inclusion $F\incl S(C)$ gives a bundle map from \eqref{eqBundleF} to \eqref{eqBundleSofC},
\begin{equation}
\label{eqBundleMapInclusionOfFibre}
\xymatrix{ EG\times_G F\ar[r]\ar[d]&EG\times_G S(C)\ar[d]
\\
BG\times b\ar[r]&BG\times B}
\end{equation}
which induces a morphism of associated Lerray--Serre spectral sequences 
$E_*^{*,*}(S(C))\to E_*^{*,*}(F)$, see Figure~\ref{figSSforBUthm}.

\begin{figure}[tbh]
\centering
\input{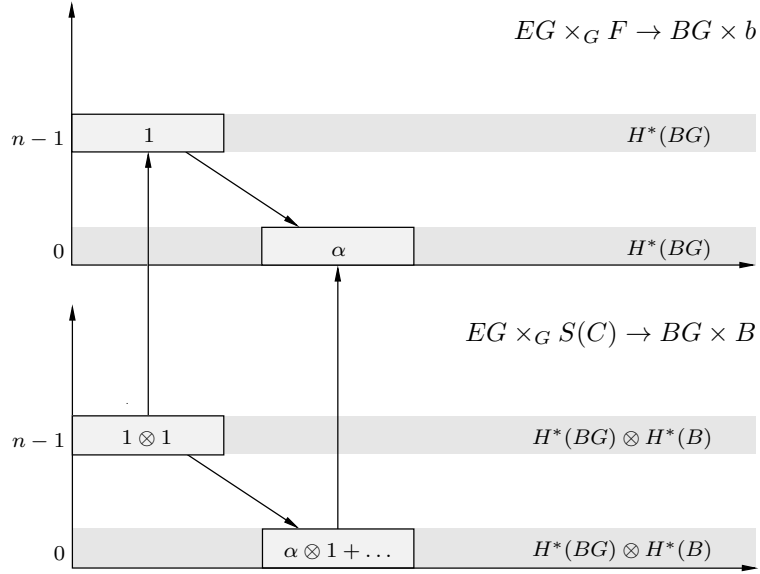} 
\caption{The morphism of spectral sequences induced 
by the bundle map \eqref{eqBundleMapInclusionOfFibre}.}
\label{figSSforBUthm}
\end{figure}

\smallskip
 
The $E_2$-page of $E_*^{*,*}(S(C))$ is $E^{p,q}_2(S(C))=H^p(BG\times B,H^q(F))$ 
where the local coefficients are given by the $\pi_1(BG\times B)$-module structure 
on $H^q(F)$.
Since $H^*(F)$ is a trivial $G\times \pi_1(B)$-module, the $0$- and $(n-1)$-rows 
of this spectral sequence are given by
\[
E^{p,q}_2(S) = H^p(BG\times B;H^q(F)) = H^p(BG\times B)=\bigoplus_{i=0}^{p}H^i(BG)\otimes H^{p-i}(B),\ \textnormal{for }q\in\{0,n-1\}.
\]
The morphism of the spectral sequences $E_*^{*,*}(S(C))\to E_*^{*,*}(F)$ 
on the $0$-row and on the $(n-1)$-row of the $E_2$-page,
\[
\bigoplus_{i=0}^{p}H^i(BG)\otimes H^{p-i}(B)\rightarrow H^p(BG),
\]
is zero on $\bigoplus_{i=1}^{p}H^i(BG)\otimes H^{p-i}(B)$. 
On $H^p(BG)\otimes H^0(B)=H^p(BG)$ it is just the identity.
The differential of the generator 
$1\otimes 1\in H^0(BG)\otimes H^0(B)$  
of $E_n^{0,n-1}(S(C))$
hits an element $\gamma\in E_n^{n-1,0}(S(C))$ of 
the bottom row $\bigoplus_{i=0}^{n}H^i(BG)\otimes H^{n-i}(B)$.
Since the differentials commute with morphisms of spectral sequences, 
$\gamma$ is an element in $\ind_G^B(S(C))\subseteq H^*(BG)\otimes H^*(B)$ 
that restricts to $\alpha$ under the map 
$\bigoplus_{i=0}^{n}H^i(BG)\otimes H^{n-i}(B)\rightarrow H^{n}(BG)$, hence $\gamma\neq 0$.
Since $\alpha$ and $\gamma$ are of degree $n$, $\gamma$ 
has the form $\alpha\otimes 1+\sum_i \delta_i\otimes\eps_i$, where 
$\deg\delta_i+\deg\eps_i=n$ and $\deg\delta_i\le n-1$.
\smallskip

\noindent
\textbf{(3.)} Formula \eqref{eqIndexThm} of Section \ref{secNewColTopTverThmRevisited} yields
\[
\ind_G^B(S)\cdot\ind_G^B(S(C))\subseteq\ind_G^B(B\times K)=\ind_G^\pt(K) \otimes H^*(B).
\]
We know that $\ind_G^\pt(K)\subseteq H^{*\ge n+1}(BG)$, and in 2.) 
we got an element $\gamma=\alpha\otimes 1+\sum_i \delta_i\otimes\eps_i\in \ind_G^B(S(C))$.
Therefore, $\ind_G^B(S)\subseteq H^*(BG)\otimes H^*(B)$ 
does not contain any non-zero element of the form $1\otimes\beta$, 
$\beta\in H^*(B)$.
Indeed, if $1\otimes\beta\in \ind_G^B(S)\wo\{0\}$, then
\[
(1\otimes\beta)\cdot\gamma=\alpha\otimes\beta+\sum_i
\delta_i\otimes(\beta\cdot\eps_i).
\]
Since $\deg(\delta_i)<\deg(\alpha)=n$, this implies that $\alpha\in\ind_G^\pt(K)$.
This contradicts $\ind_G^\pt(K)\subseteq H^{*\ge n+1}(BG)$.

Hence the following composition is injective,
\[
H^*(B)\tof{1\otimes \id} H^*(BG)\otimes H^*(B)\to H^*_G(S),
\]
where both maps are induced by projection.
The following diagram is induced by the obvious maps
\[
\xymatrix{
H^*_G(S)
&H^*_G(T)=H^*(BG)\otimes H^*(T)  \ar[l]
&H^*(T) \ar[l]
\\
&\ar[ul]^{\textnormal{inj.}}\ar[u]\ar[ur] H^*(B)}    
\]
which shows that $H^*(B)\to H^*(T)$ has to be injective as well.
\end{proof}

\section{Proof of the colored Tverberg--\vrecica\ type theorem}\label{secNewColTVThm} 

Now we use all the topological tools that we have developped to prove our Main Theorem.

\medskip

\begin{proof}[Proof of Theorem \ref{thmColoredTV}]
First we will assume that $p=2$ or that $d$ and $k$ are odd.
The remaining case, when $p$ is odd and $d$ and $k$ are even, will be shown by 
an elementary reduction   
at the end of the proof.

There is a natural proof scheme that was already used in \cite{Ziv99}, 
\cite{Vre08} and \cite{Kar07}, namely the configuration space test map method.
Our progress in this paper stems from the topological index calculations of 
Section \ref{secNewColTopTverThmRevisited} and the Borsuk-Ulam type 
Theorem \ref{lemParametrizedBU} in Section \ref{secNewBUThm}.

Suppose we are given $\CC^\ell$s and $r$ as in Theorem \ref{thmColoredTV}, 
together with the colorings
\[
\CC^\ell\ =\ \biguplus_{i=0}^{m_\ell} C_i^\ell.
\]
A partition of the sets $\CC^\ell$ into $r$ colorful pieces $F_j^\ell$ admits a $k$-plane $P$ 
intersecting all convex hulls of the pieces $F_j^\ell$ if and only if one can project 
the pieces orthogonally to a $(d-k)$-dimensional subspace of $\R^d$ (namely the 
orthogonal complement of $P$) such that the convex hulls of the projected $F_j^\ell$s
have a point in common.
Calculations turn out to be easier if we look first at the set 
of colored Tverberg points of all projections of one single fixed $\CC^\ell$ 
and then show that the corresponding sets for all $\CC^\ell$s have to intersect.

Fix an $\ell\in\{0,1,\dots,k\}$.
Let $B:=G_{d,d-k}$ be the Grassmannian manifold of all $(d-k)$-dimensional subspaces 
of $\R^d$ and   $\gamma\to B$   the tautological bundle over $B$.
Let $G:=\Z_r$ act on $[r]$ by left translations.
Let $E:=\gamma^{\oplus r}$.
$E$ is a $G$-bundle over $B$ whose fixed-point subbundle $\Delta:=E^G\iso\gamma$ 
is the thin diagonal bundle.

The space
\begin{equation}
\label{eqDefOfCSforCTV} 
K\ :=\ \Delta_{r,|C_0^\ell|} * \dots * 
       \Delta_{r,|C_{m_\ell}^\ell|}\subseteq (\sigma_{|\CC^\ell|-1})^{*r}
\end{equation}
will again be the configuration space.
For each $b\in B$, we can project $\CC^\ell$ to the $(d-k)$-space given by~$b$, 
which can be identified with the fiber over~$b$ in~$\gamma$.
This projection can be extended affine linearly to a map from $\Delta^{|\CC^\ell|-1}$ 
to the fiber.
Doing this componentwise and for all $b\in B$, we get a bundle map
\[
B\times K\tof{M} E.
\]
Define $T^\ell:=\im(M)\cap\Delta$, which is the set of colored Tverberg points 
of the respective projected sets $\CC^\ell$. Each point of $T^\ell\subseteq \Delta$, 
which lies in the fiber over say $b\in B$, lies in the intersection of the 
convex hulls of some colored $r$-partition of the set $T^\ell$ that got projected to~$b$.
Hence we need to show that $T^0\cap\dots\cap T^k\neq\emptyset$.
We will apply the calculations and tools of the previous sections.

$K$ is of dimension $(r-1)(d-k)$.
The ranks of $E$ and $\Delta$ are $r(d-k)$ and $d-k$. We claim that the 
orthogonal complement bundle $C$ of $\Delta$ in $E$ is $\Z_r$-orientable.
Since all vector bundles are $\Z_2$-orientable, we only need to deal 
with the case where $r$ is odd.
Then $r-1$ is even and $C$ is stably isomorphic to $\gamma^{r-1}$, 
which is an even power of a bundle, hence orientable.
Therefore we can apply the Borsuk--Ulam type Theorem \ref{lemParametrizedBU} 
and get that $H^*(B)\to H^*(T_i)$ is injective.
To apply the Intersection Lemma \ref{lemIntersectionLemma}, 
we need that $e^k\neq 0$ for the mod-$r$ Euler class $e\in H^{d-k}(B)$ of $\Delta\iso\gamma$.

If $r=2$ then $e$ is the top Stiefel--Whitney class $w_{d-k}$, 
whose $k$-th power is the mod-$2$ fundamental class of $B$ (see e.g.\ \cite[Lemma 1.2]{Hil80}), 
and we proved the theorem.
Now we come to the case where $r$ is odd.
If $\rank(\gamma)=d-k$ is odd then the mod-$r$ Euler class is zero and we 
cannot deal with this case.
If $d-k$ is even then we may assume that $d$ and $k$ are odd, otherwise we prove the 
theorem for $d'=d+1$ and $k'=k+1$ first and use the 
reduction lemma \ref{lemElementaryReductionLemma} below afterwards.
Then $e^k\neq 0$ is readily proved in Proposition 4.9 in \cite{Ziv99}, based on \cite{FH88}.
In fact, he even shows it for the tautological bundle over the \emph{oriented} Grassmannian.
Since this bundle is the pullback of $\gamma$ we are done by naturality of the Euler class.

Finally we prove the elementary Reduction Lemma \ref{lemElementaryReductionLemma} 
that also gives the case when $p$ is odd and $d$ and $k$ are even.

\begin{lemma}[Reduction Lemma]
\label{lemElementaryReductionLemma}
If Conjecture \ref{conjColoredTV} holds for parameters $(d,k,r_0,\dots,r_k)$ 
then so it does for
$(d',k',r_0,\dots,r_{k-1})$ with $d':=d-1$ and $k':=\max(k-1,0)$.
\end{lemma}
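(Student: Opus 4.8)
The statement is a purely combinatorial–geometric reduction: from a colored Tverberg–Vre\v{c}ica partition for parameters $(d,k,r_0,\dots,r_k)$ we want to extract one for $(d-1,\max(k-1,0),r_0,\dots,r_{k-1})$. The natural idea is to lift the lower-dimensional data into $\R^d$ by adding a new last coordinate, in such a way that the $k$-plane produced by the hypothesis is forced to be (essentially) a cylinder over the desired $(k-1)$-plane.

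\medskip

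\begin{proof}[Proof of Lemma~\ref{lemElementaryReductionLemma}]
Assume first $k\ge 1$, so $k'=k-1$ and $d'=d-1$. Suppose we are given colored sets $\CC^0,\dots,\CC^{k-1}\subseteq\R^{d-1}$ with $|\CC^\ell|=(r_\ell-1)((d-1)-(k-1)+1)+1=(r_\ell-1)(d-k+1)+1$ and color classes of size $\le r_\ell-1$. Note that this cardinality is exactly the one required for the parameters $(d,k)$, so the point counts match up without modification. We embed each $\CC^\ell$ into $\R^d=\R^{d-1}\times\R$ via $x\mapsto(x,0)$, keeping the same coloring; call the images $\widetilde{\CC}^\ell$. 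We also need one more colored set $\widetilde{\CC}^{k}$ in $\R^d$ with $|\widetilde{\CC}^{k}|=(r_k-1)(d-k+1)+1$ and color classes of size $\le r_k-1$: for this we choose $r_k-1$ points on the positive ray $\{0\}\times(0,\infty)$ wait — more carefully, we place $\widetilde{\CC}^{k}$ so that it is ``generic'' above the hyperplane $\R^{d-1}\times\{0\}$, for instance on the moment curve translated into $\R^{d-1}\times\{1\}$, with some convenient choice of $r_k$-many color classes of size one (so colorfulness is automatic for $\widetilde{\CC}^{k}$). Applying Conjecture~\ref{conjColoredTV} for $(d,k,r_0,\dots,r_k)$ to $\widetilde{\CC}^0,\dots,\widetilde{\CC}^{k}$ yields colorful partitions $F_j^\ell$ and a $k$-plane $P\subseteq\R^d$ meeting every $\conv(F_j^\ell)$.

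\medskip

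The key observation is that for $\ell=k$ the hulls $\conv(F_j^{k})$ all lie strictly in the open halfspace $\R^{d-1}\times(0,\infty)$, hence $P$ is not contained in $\R^{d-1}\times\{0\}$ — in fact $P$ must contain a point with positive last coordinate. Meanwhile, for $\ell\le k-1$ each $\conv(F_j^\ell)\subseteq\R^{d-1}\times\{0\}$, so $P$ must meet the hyperplane $H:=\R^{d-1}\times\{0\}$. Since $P\not\subseteq H$, the intersection $P\cap H$ is an affine subspace of $P$ of dimension exactly $k-1$, which we identify with a $(k-1)$-plane $P'\subseteq\R^{d-1}$. By construction $P'$ meets $\conv(F_j^\ell)$ for every $\ell\le k-1$ and every $j$ — this uses that the intersection point of $P$ with $\conv(F_j^\ell)$ already lies in $H$, hence in $P\cap H=P'$. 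Discarding the auxiliary partition for $\ell=k$, the partitions $F_j^\ell$ ($\ell\le k-1$) together with $P'$ are exactly what we need, and they are still colorful with respect to the original colorings. This proves the lemma for $k\ge 1$. For $k=0$ we have $k'=0$ and $d'=d-1$; one argues similarly, embedding $\CC^0,\dots,\CC^{k-1}$ into $\R^{d-1}\times\{0\}\subseteq\R^d$, adding one generic auxiliary set $\widetilde{\CC}^{k}$ off the hyperplane, and using that the resulting $0$-plane (a point) $P$ that meets all hulls of the lifted sets and of $\widetilde{\CC}^{k}$ cannot lie on both sides, forcing $P\in H$; but in fact when the target $k'=0$ a cleaner route is to simply intersect with $H$ as above, which degenerates a line transversal to a point. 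One checks the cardinality bookkeeping: $|\CC^\ell|=(r_\ell-1)((d-1)-0+1)+1=(r_\ell-1)d+1$ matches what the $(d,k=0)$ instance wants after the lift, so no padding of the point sets is needed.
\end{proof}

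\medskip

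\noindent\textbf{Main obstacle.} The delicate point is the genericity and placement of the auxiliary set $\widetilde{\CC}^{k}$: one must guarantee simultaneously that (i) it has the correct cardinality $(r_k-1)(d-k+1)+1$ with all color classes small (singletons suffice), (ii) every colorful partition of it has all $r_k$ hull-pieces strictly above the hyperplane $H$, so that the produced plane $P$ is genuinely transverse to $H$ and $P\cap H$ drops dimension by exactly one, and (iii) nothing degenerates when $k=0$. Point (ii) is the crux; it is what makes ``adding a dimension'' actually cost a dimension in the plane. Everything else — the coordinate embedding, the identification $P\cap H\cong P'$, preservation of colorfulness — is routine.
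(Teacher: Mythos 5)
For $k\ge 1$ your argument is essentially the paper's own proof: embed the given sets into the hyperplane $H=\R^{d-1}\times\{0\}\subseteq\R^d$, add an auxiliary colored set of the right cardinality strictly above $H$ (the paper puts its points near $(0,\dots,0,1)$ with an arbitrary coloring), apply the conjecture for $(d,k)$, observe that the resulting $k$-plane $P$ is not contained in $H$ but meets it, and take $P'=P\cap H$. One remark: your ``main obstacle'' (ii) is not an obstacle at all --- if every point of $\widetilde{\CC}^{k}$ has strictly positive last coordinate, then so does every point of every convex hull of every subset, so no genericity or moment-curve placement is needed.

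The $k=0$ case, however, contains a genuine error. You assert that the cardinalities match and ``no padding of the point sets is needed,'' but the target instance $(d-1,0,r_0)$ has $|\CC^0|=(r_0-1)d+1$ points while the source instance $(d,0,r_0)$ requires $(r_0-1)(d+1)+1$ points --- a shortfall of $r_0-1$. Moreover, your device of adjoining a disjoint auxiliary set $\widetilde{\CC}^{k}$ is unavailable here: the number of point sets in an instance is $k+1$, which equals $1$ for both source and target when $k=k'=0$, so there is no extra slot. (And if there were, a $0$-plane is a single point, which cannot simultaneously lie in $H$ and strictly above it.) The correct reduction, which the paper simply cites from \cite{BMZ09}, pads the single set $\CC^0$ itself with $r_0-1$ additional points forming one new color class, placed off the hyperplane, and then argues that the resulting Tverberg point must lie in $H$ because at least one colorful piece contains none of the added points. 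Your proof as written does not establish the $k=0$ case of the lemma.
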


\begin{proof}
We will prove only the case $k\ge 1$, since the case $k=0$ is exactly the 
reduction that is used in the proof of Lemma~\ref{lemReductionToThm21} \cite{BMZ09}.

Assume that Conjecture \ref{conjColoredTV} is true for parameters $(d,k,r_0,\dots,r_k)$ 
and suppose we are given colored sets $\CC^0,\dots,\CC^{k-1}\subseteq\R^{d-1}$ 
where we have 
to partition $\CC^\ell$ into $r_\ell$ pieces such that some $(k-1)$-dimensional plane meets the 
convex hulls of all pieces.
To do this, view $\R^{d-1}$ as the hyperplane in $\R^d$ where the last coordinate 
is zero, and define $\CC^k\subset\R^d$ to be a set of $(r_k-1)(d-k+1)+1$ points 
all of which lie close to $(0,\dots,0,1)$.
We color $\CC^k$ in an arbitrary way. For example, we may give each point a different color.
Since Conjecture \ref{conjColoredTV} is true for $(d,k,r_0,\dots,r_k)$, 
we can partition the sets $\CC^\ell$ appropriately and find a $k$-plane~$P$ 
meeting all of the convex hulls of the pieces.
Since $P$ goes through the convex hull of $\CC^k$, it cannot be fully contained in $\R^{d-1}$.
Therefore $P\cap\R^{d-1}$ is a $(k-1)$-plane intersecting the convex hulls 
of the pieces of the sets $\CC^0,\dots,\CC^{k-1}$.
\end{proof}

This finishes the proof of the Main Theorem.
\end{proof}

\medskip

The proof immediately extends to a topological version of it, the Main Theorem being the special case where all maps $f_\ell$ are affine.

\begin{theorem} [Topological Main Theorem]
\label{thmTopColoredTV}
Let $r$ be prime and $0\le k\le d$ such that $r(d-k)$ is even or $k=0$.
Let $\CC^\ell$\ $(\ell=0,\dots,k)$ be sets of cardinality $|\CC^\ell|=(r-1)(d-k+1)+1$, 
which we identify with the vertex sets of simplices $\simplex_{|\CC^\ell|-1}$.
We color them
\[
\CC^\ell\ =\ \biguplus C^\ell_i,
\]
such that no color class is too large, $|C^\ell_i|\le r-1$.
Let
\[
f_\ell: \simplex_{|\CC^\ell|-1}\to \R^d
\]
be continuous maps.
Then we can partition each $\CC^\ell$ into colorful sets $F^\ell_1,\dots,F^\ell_{r}$ 
and find a $k$-plane $P$ that intersects all the sets $f_\ell(\conv(F_j^\ell))$.
\end{theorem}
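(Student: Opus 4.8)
The plan is to deduce the Topological Main Theorem \ref{thmTopColoredTV} by running \emph{verbatim} the configuration space/test map argument given in the proof of Theorem \ref{thmColoredTV}, observing that nowhere in that argument did we use that the maps $f_\ell$ are affine. First I would fix $\ell$ and, exactly as before, set $B:=G_{d,d-k}$, let $\gamma\to B$ be the tautological bundle, put $E:=\gamma^{\oplus r}$ with its $\Z_r$-action permuting summands, $\Delta:=E^{\Z_r}\iso\gamma$ the thin diagonal, and $C$ its $\Z_r$-invariant orthogonal complement. The only change is the bundle map: instead of extending the orthogonal projection of an affine point configuration, for each $b\in B$ I compose $f_\ell$ with the orthogonal projection $\R^d\to b$ (identified with the fiber of $\gamma$ over $b$), then form the $r$-fold join and restrict to the colorful deleted join $K=\Delta_{r,|C_0^\ell|}*\dots*\Delta_{r,|C_{m_\ell}^\ell|}$ as in \eqref{eqDefOfCSforCTV}. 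This still yields a $\Z_r$-bundle map $M:B\times K\to E$ over $B$, and $T^\ell:=\im(M)\cap\Delta$ is again exactly the set of colored Tverberg points (for the maps $f_\ell$) of all $(d-k)$-dimensional projections, each such point encoding a colorful partition whose images under some projection have a common point.

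Then I would invoke the two machines already in place. The index of $K$ is $H^{*\ge n+1}(B\Z_r)$ with $n=(r-1)(d-k)=\rank(C)$, by Corollary \ref{corIndexOfDomainOfGeneralTestmap} (note $s=\sum_i|C_i^\ell|=(r-1)(d-k)$ since $|\CC^\ell|=(r-1)(d-k+1)+1$ forces the color classes to sum to $(r-1)(d-k)+1$ — here one must be a touch careful, as in the affine proof this is exactly the numerology that makes $\dim K=(r-1)(d-k)$ and the ranks match; the extra point lies in a singleton class and the reduction to $s=n$ is handled just as for $k=0$ via Lemma \ref{lemReductionToThm21}'s analogue). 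With $C\to B$ being $\Z_r$-orientable (trivial for $r=2$; for odd $r$, $C$ is stably $\gamma^{r-1}$, an even power, hence orientable) and $\pi_1(B)$ acting trivially on $H^*(F;\Z_r)$, Theorem \ref{lemParametrizedBU} applies and gives that $H^*(B;\Z_r)\to H^*(T^\ell;\Z_r)$ is injective. Doing this for every $\ell\in\{0,\dots,k\}$ and feeding $T^0,\dots,T^k$ into the Intersection Lemma \ref{lemIntersectionLemma} — whose hypothesis $e^k\neq 0$ for the mod-$r$ Euler class of $\gamma$ is supplied by \cite[Lemma 1.2]{Hil80} when $r=2$ and by \cite[Prop.~4.9]{Ziv99} when $r$ is odd and $d,k$ are odd — yields $T^0\cap\dots\cap T^k\neq\emptyset$. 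A common point in this intersection lies over some $b\in B$ and therefore furnishes the colorful partitions $F_1^\ell,\dots,F_r^\ell$ together with a $(d-k)$-plane onto which all $f_\ell(\conv(F_j^\ell))$ project to a common point; its orthogonal complement through that point is the desired $k$-plane $P$.

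For the remaining case, $r$ odd with $d,k$ even, I would use the Reduction Lemma \ref{lemElementaryReductionLemma} exactly as in the affine proof: the lemma's statement is about Conjecture \ref{conjColoredTV}, but its proof only manipulates convex hulls and planes and so applies equally to the topological setting once one allows the auxiliary set $\CC^k$ to be mapped by any continuous $f_k$ — in fact one takes $f_k$ to map $\simplex_{|\CC^k|-1}$ into a tiny neighborhood of $(0,\dots,0,1)$, which is all the original argument needs. Thus the topological statement for $(d',k')=(d+1,k+1)$, already covered since $d',k'$ are odd, implies the case $(d,k)$. I do not expect a genuine obstacle here — the point of the entire paper's architecture (using the general configuration space \eqref{eqDefOfTestSpaceK} rather than the Reduction Lemma, and phrasing everything via the index and the parametrized Borsuk--Ulam theorem) is precisely that it is insensitive to whether $f_\ell$ is affine — but the one spot demanding care is confirming that the test map $M$ is continuous and genuinely a $\Z_r$-bundle map over $B$: the projection $\R^d\to b$ varies continuously with $b\in G_{d,d-k}$, the join operation preserves continuity, and the restriction to the colorful deleted join is the same subcomplex inclusion as before, so this is routine. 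One should also double-check that the preimage description ``$(F')^{-1}(0)\leftrightarrow$ colored Tverberg partitions'' survives replacing affine maps by continuous ones — it does, since that correspondence only used that applying $f_\ell$ to a face and taking convex hulls commutes with the join/projection formalism, which holds for any continuous map. Hence the topological version follows with no new ideas, and the Main Theorem is recovered by specializing $f_\ell$ to affine maps.
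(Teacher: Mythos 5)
Your proposal is correct and is essentially the paper's own argument: the paper proves Theorem \ref{thmTopColoredTV} precisely by observing that the proof of Theorem \ref{thmColoredTV} nowhere uses affineness of the maps, and you have faithfully unpacked that observation, including the topological adaptation of the Reduction Lemma \ref{lemElementaryReductionLemma}. One correction to your parenthetical on the numerology: $s=\sum_i|C_i^\ell|=|\CC^\ell|=(r-1)(d-k+1)+1=(r-1)(d-k)+r$, not $(r-1)(d-k)$, so Corollary \ref{corIndexOfDomainOfGeneralTestmap} gives $\ind_{\Z_r}^{\pt}(K)=H^{*\ge n+r}(B\Z_r)\subseteq H^{*\ge n+1}(B\Z_r)$ with $n=\rank(C)=(r-1)(d-k)$ directly --- no reduction or special handling of an ``extra point'' is needed, whereas the value $s=n$ you wrote would actually violate the index hypothesis of Theorem \ref{lemParametrizedBU}.
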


Unfortunately our proof of the Main Theorem does not extend to prime powers 
$r_\ell=p^{a_\ell}$ over the same prime $p$.
The basic reason is that the degree of $M$ vanishes modulo $r$ if and only if $r$ 
divides $(r-1)!^d$ (see~\eqref{eqDegreeOfM}). 
Therefore this proof can only work if $r$ is a prime or if $r=4$ and $d=1$.
For $k=0$, even using the full symmetry group $\Sym_r$ does not help since 
an $\Sym_r$-equivariant test-map exists if and only if $r$ divides $(r-1)!^d$;
see~\cite{BMZ09}. To see this one needs to take a closer look at the obstruction, 
the degree proof from Section \ref{secNewColTopTverThmRevisited} does not extend immediately.

\subsubsection*{A new proof for Karasev's result \cite{Kar07}}
We can extend the above proof to arbitrary powers of a fixed prime $p$ 
if we do not require the colored Tverberg partitions to be colored, 
or in other words, if all color classes are singletons.
In this case, the configuration space $K$ of \eqref{eqDefOfCSforCTV} becomes
\[
K=[r_\ell]^{*(N+1)} = (\sigma_N)^{*r_\ell}_{\Delta(2)},
\]
which is the $r_\ell$-wise $2$-fold deleted join of an $N$-simplex. 
\barany, Shlosman and Sz\"ucs \cite{BSS81} proved that it is $(N-1)$-connected. 
This also follows from the connectivity relation $\conn(A*B)\ge \conn(A)+\conn(B)+2$ 
for $CW$-complexes.
As symmetry group we take instead of $\Z_r$ a subgroup $G\iso (\Z_p)^{m_\ell}$ 
of $S_{r_\ell}$ that acts fixed-point freely on $[r_\ell]$.
By the connectivity of $K$, $\ind_G^\pt(K)\subseteq H^{*\ge N+1}(BG)$, 
as we can directly deduce from the Leray--Serre spectral sequence 
of $K\incl EG\times_G K\to BG$.
We obtain Karasev's result.

\begin{theorem}[\cite{Kar07}]
The Tverberg--\vrecica\ Conjecture \ref{conjTV} holds 
in the special case $r_\ell=p^{a_\ell}$, where $p$ is a prime such that 
$p(d-k)$ is even or $k=0$.
\end{theorem}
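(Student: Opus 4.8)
The plan is to reuse the configuration space/test map scheme of the Main Theorem almost verbatim, replacing the group $\Z_r$ by an elementary abelian $p$-group acting freely on $[r_\ell]$, and to observe that dropping the colorfulness requirement makes the configuration space much more highly connected, which is exactly what compensates for the weaker group. First I would, for each fixed $\ell$, take $G=(\Z_p)^{m_\ell}\le S_{r_\ell}$ acting fixed-point freely on $[r_\ell]$ (possible since $r_\ell=p^{a_\ell}$), and take as configuration space the $r_\ell$-wise $2$-fold deleted join $K=(\sigma_N)^{*r_\ell}_{\Delta(2)}=[r_\ell]^{*(N+1)}$, where $N=(r_\ell-1)(d-k+1)$. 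By \barany--Shlosman--Sz\"ucs \cite{BSS81} (or the join connectivity bound $\conn(A*B)\ge\conn(A)+\conn(B)+2$) this complex is $(N-1)$-connected, so the Leray--Serre spectral sequence of $K\incl EG\times_G K\to BG$ gives $\ind_G^\pt(K)\subseteq H^{*\ge N+1}(BG;\Z_p)$.

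Next I would set up the bundle exactly as in the proof of Theorem \ref{thmColoredTV}: $B:=G_{d,d-k}$ with tautological bundle $\gamma$, $E:=\gamma^{\oplus r_\ell}$, fixed-point subbundle $\Delta=E^G\iso\gamma$, and orthogonal complement $C$ with $n:=\rank(C)=(r_\ell-1)(d-k)$. Projecting $\CC^\ell$ affinely to the $(d-k)$-plane parametrized by each $b\in B$ and taking joins gives a $G$-bundle map $M\colon B\times K\to E$; its zero set over $\Delta$ is $T^\ell:=\im(M)\cap\Delta$, the set of (ordinary, uncolored) Tverberg points of all projections of $\CC^\ell$. The index bound $\ind_G^\pt(K)\subseteq H^{*\ge n+1}(BG)$ holds because $N+1=(r_\ell-1)(d-k+1)+1> (r_\ell-1)(d-k)=n$, so the hypotheses of the Borsuk--Ulam type Theorem \ref{lemParametrizedBU} are met — provided $C$ is $\Z_p$-orientable, which is automatic for $p=2$ and, for odd $p$, follows as before since $C$ is stably $\gamma^{\oplus(r_\ell-1)}$ with $r_\ell-1$ even. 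Hence $H^*(B;\Z_p)\to H^*(T^\ell;\Z_p)$ is injective for each $\ell$.

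Finally I would invoke the Intersection Lemma \ref{lemIntersectionLemma}: the mod-$p$ Euler class $e\in H^{d-k}(B;\Z_p)$ of $\Delta\iso\gamma$ satisfies $e^k\neq 0$ — for $p=2$ this is the statement that $w_{d-k}^k$ is the mod-$2$ fundamental class of $G_{d,d-k}$ (\cite[Lemma 1.2]{Hil80}), and for odd $p$ it requires $d-k$ even, after which $e^k\neq 0$ is Proposition 4.9 in \zivaljevic\ \cite{Ziv99} (first for the oriented Grassmannian, then by naturality). The case $d-k$ even but $d,k$ not both odd, and likewise the case $k=0$, are handled by the Reduction Lemma \ref{lemElementaryReductionLemma} exactly as in the colored proof. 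The Intersection Lemma then yields $T^0\cap\dots\cap T^k\neq\emptyset$, and a common point, lying in some fiber over $b\in B$, gives a $(d-k)$-plane — equivalently the $k$-plane $P=b^\perp$ — meeting the convex hulls of the partition pieces of every $\CC^\ell$.

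I expect no genuinely new obstacle here: the only subtlety is bookkeeping, namely checking that the connectivity of the uncolored deleted join really does give the index bound $H^{*\ge N+1}(BG)$ for the non-cyclic group $G=(\Z_p)^{m_\ell}$ (this is where one must be careful that the relevant spectral sequence argument works over $\Z_p$ for an elementary abelian group, not just for $\Z_r$), and that the orientability and Euler-class hypotheses of Lemmas \ref{lemParametrizedBU} and \ref{lemIntersectionLemma} transfer unchanged. Everything else is a direct transcription of the proof of Theorem \ref{thmColoredTV}, with the gain in connectivity of $K$ trading off against the loss of the full $\Z_r$ symmetry.
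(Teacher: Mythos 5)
Your proposal matches the paper's proof essentially verbatim: the same replacement of the chessboard-complex configuration space by the $(N-1)$-connected deleted join $[r_\ell]^{*(N+1)}$ with a fixed-point free $(\Z_p)^{m_\ell}\le S_{r_\ell}$, the same index bound from the Leray--Serre spectral sequence, and the same application of Theorem~\ref{lemParametrizedBU}, the Intersection Lemma and the Euler class/reduction arguments. The only quibble is that the case $k=0$ is not handled by the Reduction Lemma but directly (there $B$ is a point, so the orientability and Euler class conditions are vacuous); otherwise the argument is exactly the paper's.
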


\subsubsection*{Tightness of the Main Theorem \ref{thmColoredTV}}
 
\begin{observation}
For any $0\le k\le d-1$, $r_\ell\ge 2$ and $0\le \ell^*\le k$, we can choose 
point sets $\CC^\ell\subset\R^d$, $0\le \ell\le k$, of size $|\CC^\ell|=(r_\ell-1)(d-k+1)+1$ 
and color them with one color class $C^{\ell^*}_j$ of size $r_{\ell^*}$ and all 
other color classes as singletons such that there are no colorful partitions 
of the $\CC^\ell$s into $r_\ell$ pieces each that admit a common $k$-dimensional transversal.
\end{observation}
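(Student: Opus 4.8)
The plan is to construct the point sets explicitly by exploiting the fact that a single oversized color class forces two of its points into the same piece, and this extra constraint is exactly enough to break the transversal count. First I would recall the philosophy behind the tightness of the ordinary (uncolored) Tverberg--\vrecica\ statement: the cardinality $|\CC^\ell|=(r_\ell-1)(d-k+1)+1$ is the minimum for which a $k$-transversal to an $r_\ell$-partition is guaranteed, and for one point fewer there are configurations with no such partition. I would place $\CC^{\ell^*}$ (or rather all of the $\CC^\ell$) in sufficiently general position, and then argue that if one color class $C^{\ell^*}_j$ has exactly $r_{\ell^*}$ points, then any colorful partition must put two of these $r_{\ell^*}$ points into a common piece $F^{\ell^*}_t$. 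Fixing any two such points $u,v$, their convex hull contains the segment $[u,v]$; the effective degrees of freedom available to the piece $F^{\ell^*}_t$ shrink, and I would reduce the situation to a configuration of $|\CC^{\ell^*}|-1=(r_{\ell^*}-1)(d-k+1)$ points (collapsing $u,v$ to a single ``super-point'' carrying a one-dimensional direction) for which no $k$-transversal-admitting $r_{\ell^*}$-partition exists, by the known tightness of the non-colorful Tverberg--\vrecica\ bound at the one-point-deficient level.

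The key steps, in order, would be: (1) Choose the underlying point configuration. For $\ell\neq\ell^*$ take $\CC^\ell$ to be any configuration realizing the tightness of the uncolored statement with a spare degree of freedom, e.g.\ points on the moment curve; the coloring of $\CC^\ell$ is by singletons so colorfulness imposes nothing. For $\CC^{\ell^*}$ take $(r_{\ell^*}-1)(d-k+1)+1$ points, with the distinguished color class $C^{\ell^*}_j$ consisting of $r_{\ell^*}$ of them, chosen so that after the forced identification of any two of those $r_{\ell^*}$ points the resulting deficient configuration has no admissible partition. (2) Show the pigeonhole step: a colorful partition of $\CC^{\ell^*}$ into $r_{\ell^*}$ pieces, with $|C^{\ell^*}_j\cap F^{\ell^*}_t|\le 1$ for all $t$ and $|C^{\ell^*}_j|=r_{\ell^*}$, forces each piece to contain exactly one point of $C^{\ell^*}_j$; but then the remaining $|\CC^{\ell^*}|-r_{\ell^*}=(r_{\ell^*}-1)(d-k+1)+1-r_{\ell^*}=(r_{\ell^*}-1)(d-k)$ points must be distributed among the pieces, and a counting/dimension argument on the transversal constraint shows this is one point too few to force a common $k$-transversal for a generic configuration. (3) Assemble: since even a single $\CC^{\ell^*}$ fails to admit a colorful transversal-compatible partition, the whole family $\CC^0,\dots,\CC^k$ does a fortiori.

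The main obstacle I anticipate is step (2): making the ``one point too few'' argument precise in the colorful transversal setting, i.e.\ showing that the generic configuration of the reduced size genuinely has no $k$-plane meeting all $r_{\ell^*}$ convex hulls. This is the place where one must invoke, or re-derive, the tightness of the uncolored Tverberg--\vrecica\ bound — which is classical for $k=0$ (the optimality of Tverberg's theorem, easy) and for $k=d-1$ (the ham-sandwich/transversal count), and in the intermediate range follows from a parameter count: the space of $k$-planes in $\R^d$ has dimension $(k+1)(d-k)$, and requiring a $k$-plane to meet $r_{\ell^*}$ prescribed convex hulls of total combinatorial dimension too small generically overdetermines the system. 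I would phrase the reduced configuration so that this dimension count is transparent, for instance by putting the points in convex position on a curve of degree high enough that no low-dimensional flat can meet all pieces. A secondary, more bookkeeping-type difficulty is handling the case $\ell^*$ arbitrary versus $\ell^*=0$ uniformly; but since the $\CC^\ell$ for $\ell\neq\ell^*$ play no role in the obstruction (their singleton colorings are vacuous), this reduces cleanly to the single-collection analysis above, and the observation follows.
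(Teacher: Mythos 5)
Your proposal has genuine gaps, the most serious being the assembly step. For $k\ge 1$ it is essentially never true that a single collection $\CC^{\ell^*}$ \emph{alone} ``fails to admit a colorful transversal-compatible partition'': a $k$-plane meeting $r$ convex hulls is a very weak condition (already for $d=2$, $k=1$, $r=2$, any partition of any $3$ points admits a line transversal), so the ``a fortiori'' conclusion of your step (3) cannot work. The other $k$ collections are not decorative; they are needed to rigidify the transversal. The paper places each $\CC^\ell$ in one of $k+1$ pairwise parallel $(d-k)$-flats $V^\ell$ whose projections to an orthogonal $k$-space span a $k$-simplex, putting $r_\ell-1$ points at each vertex of a $(d-k)$-simplex in $V^\ell$ and one point at its center $c^\ell$. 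Then $c^\ell$ is the \emph{unique} Tverberg point of $\CC^\ell$, any common transversal is forced to be the unique $k$-plane through $c^0,\dots,c^k$ and meets $V^{\ell^*}$ only in $c^{\ell^*}$, and only after this rigidification does the coloring obstruction at $\ell^*$ (a full vertex cluster plus one point at another vertex colored alike) rule out every colorful partition. Your plan contains no such mechanism for pinning down the transversal.

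Two further problems. First, your opening claim that the oversized class ``forces two of its points into a common piece'' is backwards: colorfulness forbids exactly that, and with $|C^{\ell^*}_j|=r_{\ell^*}$ each piece receives exactly one point of the class, as you correctly state later in step (2); the actual obstruction is that the piece receiving the ``extra'' colored point is then starved of points at the cluster it came from. Second, step (2) conflates ``one point too few to \emph{force} a transversal'' with ``no transversal \emph{exists}'': tightness at the deficient cardinality is itself a nontrivial existence claim requiring an explicit configuration, and a generic-position or moment-curve ansatz together with a parameter count does not show that no $k$-plane meets $r$ full-dimensional convex hulls. Both the configuration and the non-existence argument must be made explicit, which is precisely what the paper's clustered construction accomplishes.
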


\begin{proof}
Let $V^\ell$, $0\le \ell\le k$, be pairwise parallel $(d-k)$-dimensional affine subspaces of $\R^d$ 
such that their projections to an orthogonal $k$-space are the $k+1$ vertices of a $k$-simplex.
On each $V^\ell$ we place a standard point configuration $\CC^\ell$:
Take a $(d-k)$-simplex $\sigma^\ell$ in $V^\ell$, let $\CC^\ell$ have 
$r_\ell-1$ points on each vertex 
of $\sigma^\ell$ and put the last vertex of $\CC^\ell$ into the center $c^\ell$ of $\sigma^\ell$.

The sets $\CC^\ell$ admit only one Tverberg point, 
namely $c^\ell$.
Hence a potential common $k$-dimensional transversal $P$ must intersect all $c^\ell$. Since the 
$V^\ell$ have been chosen generically enough, 
$P$ is uniquely determined and $P\cap V^\ell=\{c^\ell\}$.

Now we color the points of an arbitrary $\CC^\ell$ at an arbitrary vertex of $\sigma^\ell$ red, 
together with a further point at another vertex.
Even if all other color classes in $\CC^\ell$ are singletons there will be no colored 
Tverberg partition of $\CC^\ell$. Together with $P\cap V^\ell=\{c^\ell\}$ this proves the observation.
\end{proof}

\medskip

\noindent
\textbf{Acknowledgements.} We would like to thank Sini\v{s}a \vrecica\ and Rade \zivaljevic,
who independently discovered the degree proof for \cite[Theorem 2.2]{BMZ09},
for their interest and their enthusiasm. 
We are grateful to Aleksandra, Julia and Torsten for constant support.

\end{document}